\pgfplotsset{compat=1.9}
\algnewcommand{\Inputs}[1]{%
	\State \textbf{Inputs:}
	\Statex \hspace*{\algorithmicindent}\parbox[t]{.8\linewidth}{\raggedright #1}
}
\algnewcommand{\Initialize}[1]{%
	\State \textbf{Initialize:}
	\Statex \hspace*{\algorithmicindent}\parbox[t]{.8\linewidth}{\raggedright #1}
}
\algnewcommand{\Outputs}[1]{%
	\State \textbf{Outputs:}
	\Statex \hspace*{\algorithmicindent}\parbox[t]{.8\linewidth}{\raggedright #1}
}
\tikzset{
	>=stealth',
	block/.style={
		rectangle,
		rounded corners,
		draw=black, very thick,
		text width=12em,
		minimum height=3em,
		text centered},
	link/.style={
		->,
		thick,
		shorten <=2pt,
		shorten >=2pt},
	decision/.style={
		diamond,
		draw, very thick,
		fill=blue!20, 
		text width=8em,
		aspect=3,
		text centered}
}
\theoremstyle{plain}
\newtheorem{theorem}{Theorem}[section]
\newtheorem{proposition}[theorem]{Proposition}
\newtheorem{lemma}[theorem]{Lemma}
\newtheorem{remark}[theorem]{Remark}
\newcommand{\longthmtitle}[1]{\mbox{}{\bf \textit{(#1).}}}
\newcommand{\oprocendsymbol}{\hbox{$\square$}}
\newcommand{\oprocend}{\relax\ifmmode\else\unskip\hfill\fi\oprocendsymbol}
\newcommand{\pushright}[1]{\ifmeasuring@#1\else\omit\hfill$\displaystyle#1$\fi\ignorespaces}
\newcommand{\pushleft}[1]{\ifmeasuring@#1\else\omit$\displaystyle#1$\hfill\fi\ignorespaces}
\renewcommand*\env@matrix[1][*\c@MaxMatrixCols c]{%
	\hskip -\arraycolsep
	\let\@ifnextchar\new@ifnextchar
	\array{#1}}
\DeclareMathOperator{\tr}{\bf Tr}
\DeclareMathOperator{\B}{\mathcal{B}}
\DeclareMathOperator{\Co}{\mathcal{C}}
\DeclareMathOperator{\Pa}{\mathcal{P}}
\DeclareMathOperator{\Q}{\mathcal{Q}}
\DeclareMathOperator{\N}{\mathcal{N}}
\DeclareMathOperator{\V}{\mathcal{V}}
\DeclareMathOperator{\Pint}{\mathbb{N}}
\DeclareMathOperator{\Sn}{\mathbb{S}}
\DeclareMathOperator{\R}{\mathbb{R}}
\DeclareMathOperator{\C}{\mathbb{C}}
\DeclareMathOperator{\E}{\mathcal{E}}
\DeclareMathOperator{\St}{\mathcal{S}}
\DeclareMathOperator{\graph}{\mathcal{G}}
\newcommand{\rank}{\operatorname{rank}}
\newcommand{\opt}{\operatorname{opt}}
\newcommand{\real}{\ensuremath{\mathbb{R}}}
\newcommand{\complex}{\ensuremath{\mathbb{C}}}
\renewcommand{\bar}{\overline}
\newcommand{\setdef}[2]{\{#1 \; | \; #2\}}
\newcommand{\rev}[1]{{\color{blue} #1}}
\renewcommand{\rev}[1]{#1}
\title{\rev{Virtual-Voltage Partition-Based Approach to Optimal
    Transmission Switching}}
\author{Chin-Yao Chang \quad Sonia Mart{\'\i}nez \quad Jorge Cort\'es
  \thanks{C.-Y. Chang, S. Mart{\'\i}nez, and J. Cort\'es are
    with the Department of Mechanical and Aerospace Engineering,
    UC San Diego.
  }
  \thanks{A preliminary version of this work appeared
    as~\cite{CYC-SM-JC:17-allerton} at the 2017 Allerton Conference on
    Communications, Control, and Computing.}  }
\begin{document}
\maketitle

\begin{abstract}
  \rev{This paper deals with optimal transmission switching (OTS)
    problems involving discrete binary decisions about network
    topology and non-convex power flow constraints.} We adopt a
  semidefinite programming formulation for the OPF problem which,
  however, remains nonconvex due to the presence of discrete variables
  and bilinear products between the decision variables. To tackle the
  latter, we introduce a novel physically-inspired, virtual-voltage
  approximation that leads to provable lower and upper bounds on the
  solution of the original problem. To deal with the exponential
  complexity caused by the discrete variables, we introduce a graph
  partition-based algorithm which breaks the problem into several
  parallel mixed-integer subproblems of smaller size. Simulations
  demonstrate the high degree of accuracy and affordable computational
  requirements of our approach.
\end{abstract}

\vspace*{-1.5ex}

\section{Introduction}
\rev{Optimal transmission switching (OTS) is concerned with the
  identification of power grid topologies that minimize generation
  cost while maintaining the secure operation of the grid. This leads
  to a class of non-convex optimization problems with discrete
  decision variables and bilinear constraints.
  Solving OTS has the potential to yield significant benefits in
  efficiency and reliability while respecting security constraints,
  but the complexity of solving such highly non-convex problems makes
  achieving this goal difficult. Our focus here is on developing a
  computationally efficient approach to approximate the solution of
  OTS problems.}

\textit{Literature review:} \rev{Discrete variables appear in optimal
  power flow (OPF) problems in many ways, such as transmission
  switching and post-contingency controls.} The
works~\cite{JM-ME-RA:99,SF-IS-SR:12,HA-SDB-MLS:17} provide surveys on
the solution of OPF problems with discrete variables. Many of the
methods employed to solve OPF problems have been extended to deal with
mixed-integer OPF, e.g., particle swarm
optimization~\cite{MRA-MEE:09,PEOY-JMR-CAC:08} and genetic
algorithms~\cite{AGB-PNB-CEZ-VP:02}. \rev{Transmission line switching
  or network topology reconfiguration commonly serve as corrective
  mechanisms in response to system contingencies
  see~\cite{KWH-SSO-RPO:11,JGR-LJBM:99} and references therein.
  In~\cite{JDF-RR-AC:12,PAR-JMF-AR-MCC:12,SF-JL-AA:17}}, linearized
OPF, also known as DCOPF, is deployed to solve OTS
efficiently. Despite its relative low complexity, DCOPF may lead,
especially in congested systems, to poor solutions that can even
result in voltage collapse~\cite{TP-KWH:12, MS-JDF:14}.
The work~\cite{HH-CC-PVH:17} proposes quadratic convex (QC)
relaxations for the MIP-OPF problem, which provides more accurate
results than DCOPF, while still retaining a fast computation time.
\rev{Recent studies~\cite{HH-CC-PVH:17,JM-MM-JCV:16}} show that
methods based on semidefinite programming (SDP) convex relaxations of
ACOPF may lead to better solutions than DCOPF and QC. However, how to
handle variables for transmission switching in the context of SDP is
challenging and not fully understood. The challenges stem from not
only from the integer-valued nature of these variables, but also from
the presence of bilinear terms involving the product of discrete
decision variables with continuous ones, reflecting the impact on the
physical modeling of the line being connected. The
paper~\cite{JM-MM-JCV:16} uses a lift-and-branch-and-bound procedure
to deal with the SDP formulation of mixed-integer OPF (OTS as a
special case), but has still exponential complexity in the worst case
due to the nature of the branch-and-bound procedure.  The
work~\cite{EB-SA-FP:17} also uses SDP to solve OTS problems, where
bilinear terms associated to line connections are addressed by
assuming certain nominal network topology and bilinear terms of other
discrete decision variables are dealt with using the McCormick
relaxation~\cite{GPM:76}.
	
\textit{Statement of contributions:} \rev{We consider the OTS problem
  and introduce a novel SDP convex relaxation to approximate its
  solution.}  Our contributions are twofold.  First, we introduce a
novel way of dealing with the nonconvexity coming from the presence of
bilinear terms, which we term virtual-voltage approximation. Our
approach is based on introducing virtual-voltage variables for the
terminal nodes of each switchable line and impose
physically-meaningful constraints on them.  We show that this approach
leads to sound bounds on individual flows of the switchable lines, and
provides lower and upper bounds on the optimal value of the original
problem.
%
%
Our second contribution deals with the nonconvexity coming from the
discrete variables. We build on the virtual-voltage approximation to
propose a graph partition-based algorithm that significantly reduces
the computational complexity of solving the original problem.  This
algorithm uses the values of the optimal dual variables from the
virtual-voltage method to define a weighted network graph, which is
then partitioned with a minimum weight edge-cut set. The algorithm
breaks the original network into sub-networks so as to minimize the
correlation between the solutions to the optimization problem on each
sub-network. Finally, the algorithm solves the OTS problem
on each sub-network in parallel and combines them to reconstruct the
solution of the original problem. 
We implement the proposed algorithms on various IEEE standard test
cases, and compare them with available approaches from the literature
to illustrate their superior performance regarding convergence to the
optima and computation time.


\section{Preliminaries}\label{sec:prelim}
This section introduces basic concepts used in the paper\footnote{We
  use the following notation. We denote by $\Pint$, $\real$,
  $\real_+$, and $\complex$ the sets of positive integer, real,
  positive real, and complex numbers, resp.  We denote by $|\N|$ the
  cardinality of~$\N$.  For a complex number $a\in\mathbb{C}$, we let
  $|a|$ and $\angle a$ be the complex modulus and angle of $a$, and
  its real and imaginary parts are $\Re(a)$ and $\Im(a)$. We let
  $\|v\|$ denote the $2$-norm of $v \in \complex^n$. Let $\Sn^n_+
  \subset \complex^{n \times n}$ and $\mathcal{H}^n \subset \Sn^n_+$
  be the set of positive semidefinite and $n$-dimensional Hermitian
  matrices, resp. For $A\in\mathbb{C}^{n\times n}$, we let $A^*$ and
  $\tr\{A\}$ denote its conjugate transpose and trace, resp. We let
  $A(i,k)$ denote the $(i,k)^{\text{th}}$ element of~$A$.}.

\subsubsection{Graph Theory}\label{sec:graph}
We review basic notions of graph theory
following~\cite{FB-JC-SM:08cor}.  A graph is a pair $\graph =
(\N,\E)$, where $\N \subseteq\Pint$ is the set of nodes and
$\E\subseteq\N\times\N$ is the set of edges. A \textit{self-loop} is
an edge that connects a node to itself.  The graph is
\textit{undirected} if $\{i,k\} = \{k,i\} \in\E$.
A \textit{path} is a sequence of nodes such that any two consecutive
nodes correspond to an edge. The graph is \emph{connected} if there
exists a path between any two nodes.
An \textit{orientation} of an undirected graph is an assignment of
exactly one direction to each of its edges. A graph is \textit{simple}
if it does not have self-loops or multiple edges connecting any pair
of nodes. Throughout the paper, we limit our discussion to undirected,
simple graphs.  A \textit{vertex-induced subgraph} of~$\graph$,
written $\graph[\N_s] = (\N_s,\E_s)$, satisfies $\N_s\subseteq\N$ and
$\E_s = \E\cap(\N_s\times\N_s)$. An \textit{edge cut set} is a subset
of edges which, if removed, disconnects the graph. A \textit{weighted
  graph} is a graph where each branch $\{i,k\}$ has a weight,
$w_{ik}\in\real_+$. Given the edge weights $w\in\real_+^{|\E|}$, the
\textit{adjacency matrix} $A$ has $A(i,k) = A(k,i) = w_{ik}$ if
$\{i,k\} \in \E$, and $A(i,k)=0$ otherwise. The \textit{degree matrix}
$D$ is a diagonal matrix such that $D(i,i) =
\sum_{k,\{i,k\}\in\E}w_{ik}$.  \rev{The \textit{normalized adjacency
    matrix} is  $A_n = \sqrt{D}^{-1}A\sqrt{D}^{-1}$.  } The
\textit{Laplacian matrix} is $L = D-A$. The Laplacian matrix is
positive semidefinite, with zero as an eigenvalue and multiplicity
equal to the number of connected components in the graph.  The
\emph{Fiedler vector} is the eigenvector associated with the second
smallest eigenvalue of~$L$. An \emph{$n$-partition} of a connected
$\graph = (\N,\E,A)$ divides $\graph$ into a number of $n$ connected
vertex-induced subgraphs, $\graph[\V_i]$, such that $\cup_{i=1}^n \V_i
= \N$ and $\V_i\cap\V_k=\emptyset$ for all $i\neq k$. An
\emph{$n$-optimal partition} of $\graph = (\N,\E,A)$ is an
$n$-partition of $\graph$ with $\sum_{\{i,k\}\in\E_c }w_{ik}$
minimized, where $\E_c = \E\setminus(\cup_{i=1}^n \V_i\times\V_i )$.
\textit{Spectral graph partitioning} partitions a connected graph
$\graph$ into two vertex-induced subgraphs, $\graph[\N_1]$ and
$\graph[\N_2]$, where $\N_1$ and $\N_2$ are the nodes corresponding to
the positive and non-positive entries of the Fiedler vector,
respectively.

\subsubsection{McCormick Relaxation of Bilinear
  Terms}\label{sec:McCormick}
The McCormick envelopes~\cite{GPM:76} provide linear relaxations for
optimization problems that involve bilinear terms. Consider a bilinear
term $xy$ on the variables $x, y\in\real$, for which there exist upper
and lower bounds, $ \underline{x}\le x\le \bar{x}, \quad
\underline{y}\le y\le \bar{y}$.  The McCormick relaxation consists of
substituting in the optimization problem the term $xy$ by its
surrogate $v\in\real$ and adding the following McCormick envelopes
on~$v$,
\begin{subequations}\label{eq:McCormick}
  \begin{align}
    &v \ge \underline{x}y + x\underline{y} -
    \underline{x}\underline{y}, \quad v \ge \bar{x}y + x\bar{y} -
    \bar{x}\bar{y},
    \\
    &v \le \bar{x}y + x\underline{y} - \bar{x}\underline{y}, \quad v
    \le x\bar{y} + \underline{x}y - \underline{x}\bar{y}.
  \end{align}
\end{subequations}
Constraints~\eqref{eq:McCormick} are tight, in the sense that each
plane in~\eqref{eq:McCormick} is tangent to the bilinear-constraint
manifold at two boundary lines.  The convex polyhedron in the
variables $(x,y,v)$ encloses the actual bilinear-constraint manifold.
	
\section{Problem Statement}\label{sec: ProbSet}
We begin with the formulation of the OPF problem over an electrical
network and its SDP convex relaxation following~\cite{JL-SHL:12}.
Then, we introduce binary variables leading to the \rev{OTS} problem
formulation of interest in this paper.

Consider an electrical network with generation buses~$\N_G$, load
buses~$\N_L$, and electrical interconnections described by an
undirected edge set $\E_0$.  Let $\N= \N_G \cup \N_L$ and denote its
cardinality by~$N$.  We denote the phasor voltage at bus $i$ by $V_i =
E_i e^{j \theta_i}$, where $E_i\in\mathbb{R}$ and
$\theta_i\in[-\pi,\pi)$ are the voltage magnitude and phase angle,
respectively.  For convenience, $V = \setdef{V_i}{i\in\N}$ denotes the
collection of voltages at all buses. The active and reactive power
injections at bus $i$ are given by the power flow equations
\begin{align}\label{eq:PFE}
  P_i & = \tr\{Y_iVV^*\} + P_{D_i}, \quad Q_i =
  \tr\{\bar{Y}_iVV^*\} + Q_{D_i},
\end{align}
where $P_{D_i},Q_{D_i}\in\R$ are the active and reactive power demands
at bus~$i$, 
and $Y_i, \bar{Y}_i \in\mathcal{H}^{N}$ are derived from the
admittance matrix \textbf{Y}$\in \complex^{N \times N}$ as follows
\begin{subequations}\label{eq:admit}
  \begin{align}
    Y_i & = \frac{(e_ie_i^\top\textbf{Y})^* + e_ie_i^\top\textbf{Y}}{2},\quad 
    \bar{Y}_i = \frac{(e_ie_i^\top\textbf{Y})^* -
      e_ie_i^\top\textbf{Y}}{2j}.
  \end{align}  
\end{subequations}
Here $\{e_i\}_{i= 1,\dots,N}$ denotes the canonical basis of
$\real^N$.  The OPF problem also involves the box constraints
\begin{align}\label{eq:allConst}
  &\underline{V}_i^2 \leq |V_i|^2 \leq \bar{V}_i^2, \; \forall i\in
  \mathcal{N}, \nonumber
  \\
  &\underline{P}_i \leq P_i \leq \bar{P}_i, \;\; \underline{Q}_i \leq
  Q_i \leq \bar{Q}_i, \; \forall i\in
  \mathcal{N},
  \\
  \nonumber &|V_{i} - V_{k}|^2 \leq \bar{V}_{ik}, \;\forall \{i,k\}
  \in \E_0,
\end{align}
where $\bar{V}_{ik}$ is the upper bound of the voltage difference
between buses $i,k$, and $\underline{V}_i$ and $\bar{V}_i$ are the
lower and upper bounds of the voltage magnitude at bus $i$,
respectively.  \rev{Notice that the upper bound of the voltage
  difference is equivalent to line thermal constraints, as described
  for instance in~\cite{RM-MA-JL:16}. The voltage difference
  constraints prevent overheating of transmission lines.}  All
$\underline{P}_i,\underline{Q}_i, \bar{P}_i, \bar{Q}_i$, are defined
similarly. The objective function is typically given as a quadratic
function of the active power,
\begin{align}\label{eq:Cost}
  & \sum\nolimits_{k \in \mathcal{N}_G}c_{i2} P_i^2 + c_{i1} P_i,
\end{align}
where $c_{i2}\geq 0$, and $c_{i1}\in \mathbb{R}$. The OPF problem is
the minimization over~\eqref{eq:Cost} subject to~\eqref{eq:PFE}
and~\eqref{eq:allConst}. Such optimization is non-convex due to the
quadratic terms on $V$.  To address this, one can equivalently define
$W = VV^*\in \mathcal{H}^N$ (or $W\in \mathcal{H}^N$ and $\rank(W)=1$)
as the decision variable (note all the terms in~\eqref{eq:PFE},
\eqref{eq:allConst} and~\eqref{eq:Cost} are quadratic in
$V$). Dropping the rank constraint on $W$ makes the OPF problem
convex, giving rise to the SDP convex relaxation,
\begin{flalign*}
  \textbf{(P1)} \hspace{15mm}\min_{W\succeq 0} \sum\nolimits_{i \in
    \N_G} c_{i2}
  P_i^2+
  c_{i1} P_i, &&
\end{flalign*}
subject to
\begin{subequations}\label{eq:const}
  \begin{alignat}{7}
    & P_i = \tr\{Y_iW\} + P_{D_i}, \;
    \forall i\in \mathcal{N}, \label{eq:const-a}\\
    & Q_i = \tr\{\bar{Y}_iW\} + Q_{D_i}, \;
    \forall i\in \mathcal{N}, \label{eq:const-b}\\
    & \underline{P}_i \leq P_i \leq \bar{P}_i, \;\underline{Q}_i \leq Q_i \leq \bar{Q}_i
    , \;\;
    \forall i\in \mathcal{N},
    \label{eq:const-c}
    \\
    & \underline{V}_i^2 \leq \tr\{M_iW\} \leq \bar{V}_i^2, \; \forall
    i\in \mathcal{N} , \displaybreak[0]
    \label{eq:const-e}
    \\
    & \tr\{M_{ik}W\} \leq \bar{V}_{ik}, \; \forall \{i,k\} \in
    \E_0,
    \label{eq:const-f}
  \end{alignat}
\end{subequations}
where $M_i, M_{ik}\in \mathcal{H}^{N}$ are defined so that
$\tr\{M_iW\} = |V_i|^2$ and $\tr\{M_{ik}W\} = |V_i-V_k|^2$.

\rev{In the OTS problem, the set of transmission lines $\E_0$ is
  divided into a set of switchable $\E_s$ and non-switchable $\E$
  lines such that $ \E_0 = \E_s\cup\E$ (note that $\E_s = \E_0$ and
  $\E=\emptyset$ is possible).  Choosing among the switchable lines
  which ones are active affects the nodal active and reactive power
  injections in~\eqref{eq:const-c}.  The question is then to determine
  what the optimal choice of switching lines is. We formalize this
  problem next.}  For each line $\{i,k\}\in\E_s$, we define a binary
variable $\alpha_{ik}\in\{0,1\}$, and we say the line is connected if
$\alpha_{ik}=1$ and disconnected otherwise. If $\alpha_{ik}=1$, then
the power flow from node $i$ to $k$ through edge $\{i,k\}\in\E_s$ is
\begin{align}\label{eq:swPQ}
  P_{ik} = \tr\{Y_{ik}W\}, \quad Q_{ik} = \tr\{\bar{Y}_{ik}W\},
\end{align}
where $Y_{ik},\bar{Y}_{ik} \in \C^{N\times N}$ are defined as follows:
all entries are prescribed to be zero except the ones defined
by\footnote{We omit charging susceptance, tap ratio and phase shift of
  transformers for simplicity.}
\begin{align*}
  Y_{ik}(i,i) &= \Re(y_{ik}), \quad  Y_{ik}(i,k) = Y^*_{ik}(i,k)= -y_{ik}/2,\\
  \bar{Y}_{ik}(i,i) &= \Im(y_{ik}), \quad \bar{Y}_{ik}(i,k) =
  \bar{Y}^*_{ik}(i,k) =-j\cdot y_{ik}/2.
\end{align*}
Here $y_{ik}\in\complex$ is the admittance of
line~$\{i,k\}$. Taking~\eqref{eq:swPQ} into account, the active and
reactive power of each node become
\begin{align}\label{eq:nodal_s_power}
  &P_i = \tr\{Y_iW\} + P_{D_i}+ \sum\nolimits_{k\in\St_{i}}\alpha_{ik}P_{ik},
  \\
  \nonumber &Q_i = \tr\{\bar{Y}_iW\} + Q_{D_i}+
  \sum\nolimits_{k\in\St_{i}}\alpha_{ik}Q_{ik},
\end{align}
where $\St_{i}:= \setdef{k}{\{i,k\}\in\E_s}$.
%
Given $\alpha\in\{0,1\}^{|{\E}_s|}$, we use \textbf{(P1)}-$\alpha$ to
refer to the OPF problem solved with the network topology with extra
lines as determined by $\alpha$.

We are interested in solving what we call \textbf{(P2)}, which is the
optimization \textbf{(P1)} with constraints~\eqref{eq:const-a} and
\eqref{eq:const-b} replaced
by~\eqref{eq:swPQ}~and~\eqref{eq:nodal_s_power}.  \rev{In addition to
  the optimization of the power flow, this formulation incorporates
  the optimal choice among the switchable lines in $\E_s$.  }  The
problem~\textbf{(P2)} is non-convex for two reasons: the binary
variables $\alpha_{ik}$ and the bilinear products of $\alpha_{ik}$
and~$W$. The first problem can be addressed using existing
integer-programming solvers~\cite{RAJ-RS-BCP:12,EB-SA-FP:17}. The
McCormick relaxation described in Section~\ref{sec:McCormick} is the
standard way to deal with the second problem. In this paper, we
instead provide alternative routes to address each of these problems
for the optimization~\textbf{(P2)}.

\rev{
  \begin{remark}\longthmtitle{Networks where all lines are
      switchable}\label{rem:allswline}
    {\rm General formulations of the OTS problem assume all
      transmission lines are switchable, see
      e.g.~\cite{PAR-AR-MCC-EG-EN-CRP:12,HH-CC-PVH:17}. Some works
      also pre-select a pool of switchable lines through heuristic
      methods~\cite{PAR-AR-MCC-EG-EN-CRP:12}. 
      The approach described next is applicable to both types of
      scenarios.
    }
    \oprocend
  \end{remark}
}

	
\section{Virtual-Voltage Approximation of
  Bilinear~Terms}\label{sec:bilinear}

We introduce here a novel way to deal with the bilinear terms
in~\textbf{(P2)} which we term \emph{virtual-voltage
  approximation}. We start by noting that every binary variable
$\alpha$ only appears in the bilinear products
in~\eqref{eq:nodal_s_power} together with another continuous
variable~$W$. If we convexify the binary variables by having them take
values in $[0,1]$, then we can interpret each bilinear term
corresponding to $\{i,k\}\in\E_s$ as a line power flow from $i$ to
$k$, with the magnitude bounded by what $W$ indicates. Following this
reasoning, if the direction of power flow of every line
$\{i,k\}\in\E_s$ was known, then the bilinear term would no longer be
an issue. For example, if we knew that $P_{ik} = \tr\{Y_{ik}W\}
\in\real_+$ and $Q_{ik} = \tr\{\bar{Y}_{ik}W\} \in\real_+$, then we
could define new variables, $\hat{P}_{ik}\in\real$ and
$\hat{Q}_{ik}\in\real$, replacing $\alpha_{ik}P_{ik}$ and
$\alpha_{ik}Q_{ik}$ in~\eqref{eq:nodal_s_power}, respectively, and
impose
\begin{align}\label{eq:knownLP}
  0 \leq \hat{P}_{ik}\leq P_{ik}, \quad 0 \leq \hat{Q}_{ik}\leq
  Q_{ik}.
\end{align}
This would eliminate the bilinear terms and the only remaining
non-convexity would be that the physical feasible solution should
satisfy $\hat{P}_{ik} \in \{0, P_{ik}\}$ and $\hat{Q}_{ik} \in \{0,
Q_{ik}\}$.  In general, however, the direction of power flow of the
lines $\{i,k\}\in\E_s$ is not known a priori and, hence, the trivial
convex constraints~\eqref{eq:knownLP} for the relaxation are no longer
valid.

\subsection{Convex Relaxation Via Virtual
  Voltages}\label{sec:convex-relaxation-virtual}

Our idea to approximate each bilinear term builds on defining a
virtual-voltage for the terminal nodes of the line and impose
constraints on them to make sure they have physical sense.  We make
this precise next. Let $\hat{\E}_s$ be an arbitrary orientation of
$\E_s$.  To define the virtual-voltages, and in keeping with the SDP
approach, for each $\{i,k\}\in\hat{\E}_s$ we introduce a two-by-two
positive semidefinite matrix $U_{ik} \in \Sn^2_+$. This matrix encodes
physically meaningful voltages at the terminal nodes if its rank is
one, namely, $U_{ik} = u_{ik} u_{ik}^\top$, with $u_{ik}(1)$ and
$u_{ik}(2)$ corresponding to the voltages of nodes~$i$ and~$k$,
respectively.  For convenience, we introduce $\hat{M} = [
\begin{smallmatrix}
  1 & -1
  \\
  -1 & 1
\end{smallmatrix}
] $ and impose the following constraints on~$U_{ik}$
\begin{subequations}\label{eq:U_con}
  \begin{alignat}{3}
    U_{ik}(1,1) & \leq \tr\{M_iW\}, \label{eq:U_con1}\\
    U_{ik}(2,2) & \leq \tr\{M_kW\}, \label{eq:U_con2}\\
    \tr\{\hat{M}U_{ik} \} & \leq \tr\{M_{ik}W
    \} . \label{eq:U_con3}
  \end{alignat}
\end{subequations}
Constraints~\eqref{eq:U_con1} and \eqref{eq:U_con2} ensure that the
voltage magnitudes of $i$ and $k$ derived from $U_{ik}$ are no bigger
than the ones from $W$.  Constraint~\eqref{eq:U_con3} ensures that the
voltage difference between nodes $i$ and $k$ computed from $U_{ik}$ is
less than the corresponding difference from $W$. Therefore, if the
matrix $U_{ik}$ has rank one, constraints~\eqref{eq:U_con} ensure that
we obtain physically meaningful and feasible voltage values.

Let $\hat{Y}_{ik}\in\mathbb{C}^{2\times 2}$ be the principal
sub-matrix of $Y_{ik}$ by only keeping the rows and columns associated
with nodes~$i$ and~$k$. We define $\hat{\bar{Y}}_{ik}$ similarly.  We
replace $\alpha_{ik}P_{ik}$ and $\alpha_{ik}Q_{ik}$
in~\eqref{eq:nodal_s_power} by $\tr\{\hat{Y}_{ik}U_{ik} \}$ and
$\tr\{\hat{\bar{Y}}_{ik}U_{ik}\}$, respectively. We now have all the
elements necessary to convexify \textbf{(P2)} as follows
\begin{flalign*}
  \textbf{(P3)} \hspace{5mm}\min_{\substack{W\succeq 0,
      U_{ik}\succeq 0\; \forall \{i,k\}\in\hat\E_s}} \sum\nolimits_{i \in \N_G}
  \Big(c_{i2} P_i^2+ c_{i1} P_i\Big), &&
\end{flalign*}
subject to~\eqref{eq:const-c}-\eqref{eq:const-f},~\eqref{eq:U_con},
and
\begin{subequations}\label{eq:conP3}
  \begin{alignat}{2}
    &P_i = \tr\{Y_iW\} + P_{D_i}+
    \sum\nolimits_{k\in\St_{i}}\tr\{\hat{Y}_{ik}U_{ik} \},\label{eq:conP3-4}
    \\
    &Q_i = \tr\{\bar{Y}_iW\} + Q_{D_i}+
    \sum\nolimits_{k\in\St_{i}}\tr\{\hat{\bar{Y}}_{ik}U_{ik}
    \}. \label{eq:conP3-5}
  \end{alignat}
\end{subequations}


Each optimal solution $U_{ik}^{\opt_3}$ of \textbf{(P3)} has a
dominant eigenvalue, much larger than the other one.  To formally
state the result, let $W^{\opt_3}_{ik} \in\mathcal{H}^2$ denote the
principal sub-matrix of the optimum $W^{\opt_3}$ of \textbf{(P3)}
obtained by removing from $W^{\opt_3}$ all columns and rows except the
ones corresponding to $i$ and~$k$. We use the spectral decomposition
to rewrite $U_{ik}^{\opt_3}$~as
\begin{align*}
  U_{ik}^{\opt_3} = 
    a_{ik}[u_{i}, u_{k}]^\top
  [u_{i}^*, u_{k}^*] + [u_{i}, -u_{k}]^\top [u_{i}^*, -u_{k}^*],
\end{align*}
where $u_{i}\in\complex$, $u_{k}\in\complex$, and $a_{ik}\geq 1$ is
the condition number of $U_{ik}^{\opt_3}$.  Lemma~\ref{lem:rank1-U}
establishes a useful lower bound on~$a_{ik}$.

\begin{lemma}\longthmtitle{Lower bound on the condition
    number}\label{lem:rank1-U}
  For all $\{i,k\}\in\hat\E_s$, the optima of \textbf{(P3)} has
  \begin{align}\label{eq:bdd_a}
     a_{ik}\geq\frac{|u_{i}|^2 + |u_{k}|^2 + 2\Re(u_{i}u_{k}^*)}{\bar{V}_{ik} -
      (|u_{i}|^2 + |u_{k}|^2 - 2\Re(u_{i}u_{k}^*))}.
  \end{align}
\end{lemma}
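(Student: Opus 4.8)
The plan is to prove the inequality by a direct computation: expand the optimal matrix $U_{ik}^{\opt_3}$ in terms of $a_{ik}$, $u_i$, $u_k$, substitute into the structural constraints~\eqref{eq:U_con3} and~\eqref{eq:const-f}, and then isolate $a_{ik}$. First I would use the spectral decomposition displayed just before the lemma to read off the entries of $U_{ik}^{\opt_3}$: carrying out the two rank-one outer products gives $U_{ik}^{\opt_3}(1,1)=(a_{ik}+1)|u_i|^2$, $U_{ik}^{\opt_3}(2,2)=(a_{ik}+1)|u_k|^2$, and $U_{ik}^{\opt_3}(1,2)=(a_{ik}-1)\,u_i u_k^*$ (the off-diagonal contributions of the two rank-one pieces partially cancel). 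Consequently $\tr\{\hat M U_{ik}^{\opt_3}\}=U_{ik}^{\opt_3}(1,1)+U_{ik}^{\opt_3}(2,2)-2\Re\big(U_{ik}^{\opt_3}(1,2)\big)=(a_{ik}+1)(|u_i|^2+|u_k|^2)-2(a_{ik}-1)\Re(u_iu_k^*)$.

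Next I would chain the two feasibility constraints satisfied at the optimum. Constraint~\eqref{eq:U_con3} gives $\tr\{\hat M U_{ik}^{\opt_3}\}\le\tr\{M_{ik}W^{\opt_3}\}$, while~\eqref{eq:const-f} (which applies since $\{i,k\}\in\E_s\subseteq\E_0$) gives $\tr\{M_{ik}W^{\opt_3}\}\le\bar V_{ik}$. Combining, and writing $s=|u_i|^2+|u_k|^2$, $t=\Re(u_iu_k^*)$, the inequality reads $(a_{ik}+1)s-2(a_{ik}-1)t\le\bar V_{ik}$, which I would regroup as $a_{ik}\,|u_i-u_k|^2+|u_i+u_k|^2\le\bar V_{ik}$ using $s-2t=|u_i-u_k|^2$ and $s+2t=|u_i+u_k|^2$.

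The crux is converting this into a lower bound on $a_{ik}$. A naive rearrangement — dividing through by $|u_i-u_k|^2$ — yields an \emph{upper} bound on $a_{ik}$, i.e. the wrong direction; the device that fixes this is to exploit $a_{ik}\ge1$ and $\bar V_{ik}>0$ to replace $\bar V_{ik}$ on the right by the larger quantity $a_{ik}\bar V_{ik}$ before isolating $a_{ik}$. From $a_{ik}|u_i-u_k|^2+|u_i+u_k|^2\le a_{ik}\bar V_{ik}$ one obtains $|u_i+u_k|^2\le a_{ik}\big(\bar V_{ik}-|u_i-u_k|^2\big)$, and dividing by $\bar V_{ik}-|u_i-u_k|^2$ gives exactly~\eqref{eq:bdd_a}. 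To justify the division I would record that this denominator is nonnegative: $|u_i-u_k|^2\le\tr\{\hat M U_{ik}^{\opt_3}\}\le\tr\{M_{ik}W^{\opt_3}\}\le\bar V_{ik}$, where the first inequality holds because $\tr\{\hat M U_{ik}^{\opt_3}\}-|u_i-u_k|^2=a_{ik}|u_i-u_k|^2+4\Re(u_iu_k^*)\ge|u_i+u_k|^2\ge0$; in the borderline case $\bar V_{ik}=|u_i-u_k|^2$ this forces $|u_i+u_k|^2=0$, so the claimed bound is trivial.

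I expect the only genuine obstacle to be the sign/direction issue in this last step — noticing that one must not divide by $|u_i-u_k|^2$ but instead first bound $\bar V_{ik}$ by $a_{ik}\bar V_{ik}$; everything else is routine bookkeeping with the $2\times2$ outer products and the definitions of $M_i$, $M_{ik}$, and $\hat M$.
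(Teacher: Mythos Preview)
Your proposal is correct and follows essentially the same route as the paper: compute $\tr\{\hat M U_{ik}^{\opt_3}\}=(a_{ik}+1)(|u_i|^2+|u_k|^2)-2(a_{ik}-1)\Re(u_iu_k^*)$ from the spectral decomposition, bound it via~\eqref{eq:U_con3} and~\eqref{eq:const-f}, and use $a_{ik}\ge1$ to insert a factor of $a_{ik}$ on the right before isolating $a_{ik}$. The only cosmetic difference is placement---the paper writes the chain directly as $\tr\{\hat M U_{ik}^{\opt_3}\}\le a_{ik}\tr\{M_{ik}W^{\opt_3}\}\le a_{ik}\bar V_{ik}$ and then ``rearranges,'' whereas you first pass all the way to $\bar V_{ik}$ and then multiply; your added justification that the denominator $\bar V_{ik}-|u_i-u_k|^2$ is nonnegative is a detail the paper leaves implicit.
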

\begin{proof}
  By constraint~\eqref{eq:U_con3} and \eqref{eq:const-f}, we have
  \begin{align*}
    &(a_{ik}+1)(|u_{i}|^2 + |u_{k}|^2)- 2(a_{ik}-1)\Re(u_{i}u_{k}^*)
    \\
    & \hspace{25mm} \leq
    a_{ik}\tr\{M_{ik}W^{\opt_3}\} \leq a_{ik} \bar{V}_{ik}
  \end{align*}
  Lemma~\ref{lem:rank1-U} follows \rev{by rearranging the terms of the
    first line and the RHS of the second line in the inequality above.
  }
\end{proof}

For all practical purposes, the result of Lemma~\ref{lem:rank1-U}
implies that the matrix $U_{ik}^{\opt_3}$ specifies well-defined
virtual-voltages at the terminal nodes, as we explain next.

\begin{remark}\longthmtitle{Optimal solutions have well-defined virtual
    voltages}\label{rem:rank1-U}
  {\rm Using Lemma~\ref{lem:rank1-U}, we justify that the optimal
    solution $U_{ik}^{\opt_3}$ has a dominant eigenvalue as
    follows. The denominator of~\eqref{eq:bdd_a} is always
    non-negative due to~\eqref{eq:U_con3}. The order of the
    denominator of~\eqref{eq:bdd_a} is at most $10^{-2}$ as
    $\bar{V}_{ik}\approx 10^{-2}$ in most test cases. On the other
    hand, when the virtual-voltage satisfies
    $\tr\{\hat{M}U_{ik}^{\opt_3}\} \approx \tr\{M_{i}W^{\opt_3}\}$ or
    $\tr\{\hat{M}U_{ik}^{\opt_3}\} \approx \tr\{M_{k}W^{\opt_3}\}$,
    then the numerator is lower bounded by a scalar close to one, as
    $\underline{V}_i\approx 1$. As a consequence, the fraction
    in~\eqref{eq:bdd_a} is usually bigger than $10^{2}$.
    Our simulations on IEEE 118 and 300 bus text cases confirm that
    $a_{ik}$ is at least $100$.  }\oprocend
\end{remark}

\subsection{Physical Properties of the Convex Relaxation}

The active and reactive power flows in~\textbf{(P3)} on a switchable
line $\{i,k\}\in\E_s$ are determined by~$U_{ik}$ according to
\begin{align}\label{eq:aux}
  P^{\opt_3}_{ik} = \tr\{\hat{Y}_{ik}U_{ik}^{\opt_3}\},\quad
  Q^{\opt_3}_{ik}=\tr\{\hat{\bar{Y}}_{ik}U_{ik}^{\opt_3}\}.
\end{align}
The next result shows that the optimal power losses on each edge are
bounded by the ones computed from~$W^{\opt_3}$.

\begin{lemma}\longthmtitle{Bounds on the sums of line active and
    reactive powers}\label{lem:UpBdPQ}
  If the line charging susceptance is zero for all
  $\{i,k\}\in\hat\E_s$, then the following inequalities hold
  \begin{subequations}\label{eq:UpBdPQ}
    \begin{align}
      &0\leq P^{\opt_3}_{ik} + P^{\opt_3}_{ki} \leq \tr\{(Y_{ik} +
      Y_{ki})W_{ik}^{\opt_3}\} ,
      \\
      &0\leq Q^{\opt_3}_{ik} + Q^{\opt_3}_{ki}\leq \tr\{(\bar{Y}_{ik} +
      \bar{Y}_{ki})W_{ik}^{\opt_3}\} .
    \end{align}    
  \end{subequations}
\end{lemma}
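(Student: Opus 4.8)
The plan is to reduce both two-sided estimates to the single fact that $\hat{M}\succeq0$, together with constraint~\eqref{eq:U_con3}. The first step is to record a structural identity for the line matrices. Writing out the $2\times2$ principal submatrices directly from the definitions of $Y_{ik}$ and $Y_{ki}$ (recall that $Y_{ki}$ has the complementary sparsity pattern, with nonzero entries only at $(k,k)$ and at $(i,k),(k,i)$) and using that the charging susceptance vanishes, so that no shunt terms appear, one checks that
\begin{align*}
  \hat{Y}_{ik}+\hat{Y}_{ki}&=\Re(y_{ik})\,\hat{M},\\
  \hat{\bar{Y}}_{ik}+\hat{\bar{Y}}_{ki}&=-\Im(y_{ik})\,\hat{M},
\end{align*}
where $\Re(y_{ik})\ge0$ and $-\Im(y_{ik})\ge0$ because a transmission line has nonnegative series resistance and inductive series reactance. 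These are just the matrix form of the familiar fact that a lossy line absorbs $\Re(y_{ik})|V_i-V_k|^2$ of active and $-\Im(y_{ik})|V_i-V_k|^2$ of reactive power.

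Using~\eqref{eq:aux} and the analogous definitions $P^{\opt_3}_{ki}=\tr\{\hat{Y}_{ki}U^{\opt_3}_{ik}\}$, $Q^{\opt_3}_{ki}=\tr\{\hat{\bar{Y}}_{ki}U^{\opt_3}_{ik}\}$ for the reverse direction, the terminal flows then combine as
\begin{align*}
  P^{\opt_3}_{ik}+P^{\opt_3}_{ki}&=\Re(y_{ik})\,\tr\{\hat{M}U^{\opt_3}_{ik}\},\\
  Q^{\opt_3}_{ik}+Q^{\opt_3}_{ki}&=-\Im(y_{ik})\,\tr\{\hat{M}U^{\opt_3}_{ik}\}.
\end{align*}
For the lower bounds I would write $\hat{M}=mm^\top$ with $m=(1,-1)^\top$, so that $\tr\{\hat{M}U^{\opt_3}_{ik}\}=m^\top U^{\opt_3}_{ik}m\ge0$ by positive semidefiniteness of $U^{\opt_3}_{ik}$; multiplying by the nonnegative scalars above yields $P^{\opt_3}_{ik}+P^{\opt_3}_{ki}\ge0$ and $Q^{\opt_3}_{ik}+Q^{\opt_3}_{ki}\ge0$.

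For the upper bounds I would invoke constraint~\eqref{eq:U_con3}, which states $\tr\{\hat{M}U^{\opt_3}_{ik}\}\le\tr\{M_{ik}W^{\opt_3}\}$; since $M_{ik}$ is by construction the lift of $\hat{M}$ to the $(i,k)$ block, the right-hand side equals $\tr\{\hat{M}W^{\opt_3}_{ik}\}$. Multiplying by $\Re(y_{ik})\ge0$ and reusing the structural identity gives $P^{\opt_3}_{ik}+P^{\opt_3}_{ki}\le\Re(y_{ik})\tr\{\hat{M}W^{\opt_3}_{ik}\}=\tr\{(Y_{ik}+Y_{ki})W^{\opt_3}_{ik}\}$, and the same computation with $-\Im(y_{ik})$ delivers the reactive estimate. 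I do not expect a genuine obstacle: the only points needing care are the bookkeeping of the orientation of $\{i,k\}\in\hat\E_s$ and the $1\leftrightarrow i$, $2\leftrightarrow k$ labeling of the $2\times2$ blocks, and the observation that the zero-charging-susceptance hypothesis is precisely what makes $\hat{Y}_{ik}+\hat{Y}_{ki}$ (and its reactive analogue) an exact scalar multiple of $\hat{M}$ with no residual shunt contribution.
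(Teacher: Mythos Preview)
Your argument is correct and follows essentially the same route as the paper: identify $\hat{Y}_{ik}+\hat{Y}_{ki}=\Re(y_{ik})\hat{M}$ and $\hat{\bar{Y}}_{ik}+\hat{\bar{Y}}_{ki}=-\Im(y_{ik})\hat{M}$ under the zero-susceptance hypothesis, then combine the nonnegativity of the scalar factors with constraint~\eqref{eq:U_con3} and the definitions~\eqref{eq:aux}. Your write-up is in fact more explicit than the paper's, which leaves the lower bound implicit; your use of $\hat{M}=mm^\top$ together with $U_{ik}^{\opt_3}\succeq0$ makes that step transparent.
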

\begin{proof}
  If the line charging susceptance is zero, then $Y_{ik} + Y_{ki}$
  and $\bar{Y}_{ik} + \bar{Y}_{ki}$ take the following form
  \begin{align*}
    Y_{ik} + Y_{ki} = \hat{M}\Re(y_{ik}), \;
    \bar{Y}_{ik}\hspace{-1mm} + \bar{Y}_{ki} =
    \hspace{-1mm}\hat{M}\Im(-y_{ik}).
  \end{align*}
  Since both $\Re(y_{ik})$ and $\Im(-y_{ik})$ are non-negative, the result
  of Lemma~\ref{lem:UpBdPQ} follows from~\eqref{eq:U_con3} and the
  equalities~\eqref{eq:aux}.
\end{proof}

We next seek to upper bound the individual flows $|P_{ik}|$ and
$|P_{ki}|$. Our next result shows that, under certain conditions for
\textbf{(P3)}, stronger properties hold on the active power retrieved
from the optimal solution $U_{ik}^{\opt_3}$ and~$W^{\opt_3}$.

\begin{proposition}\longthmtitle{Bounds on directional power
    flow}\label{prop:UpBdPQr}
  \rev{Let $w_i = \sqrt{W^{opt_3}(i,i)}$ for each $i \in \N$.  Assume
    $\{i,k\}\in\hat\E_s$ is purely inductive, $|u_k|\in\{0,w_k\}$} and
  \begin{align}\label{eq:volt-diff}
    w_i\geq w_k/2 ,\quad w_k\geq w_i/2.
  \end{align}
  Then the following inequalities hold
  \begin{align}\label{eq:UpBdPQr}
    \!  |P^{\opt_3}_{ik}| \!\leq\! |\tr\{Y_{ik}W_{ik}^{\opt_3}\}| ,\,
    |P^{\opt_3}_{ki}| \!\leq\!  |\tr\{Y_{ki}W_{ik}^{\opt_3}\}|.
  \end{align}
\end{proposition}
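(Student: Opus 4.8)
The plan is to first collapse the two inequalities in~\eqref{eq:UpBdPQr} to a single scalar statement, and then prove that statement from the semidefinite constraints together with the hypotheses. For the reduction, note that a purely inductive line has $\Re(y_{ik})=0$, so the principal $2\times 2$ blocks $\hat{Y}_{ik}$ and $\hat{Y}_{ki}$ have vanishing diagonal and purely imaginary off-diagonal entries, and in fact they are negatives of one another, as follows from the identity $Y_{ik}+Y_{ki}=\hat{M}\,\Re(y_{ik})$ recorded in the proof of Lemma~\ref{lem:UpBdPQ}. Consequently, for every Hermitian $2\times 2$ matrix $M$ one has $\tr\{\hat{Y}_{ik}M\}=-\Im(y_{ik})\,\Im(M(1,2))$ and $\tr\{\hat{Y}_{ki}M\}=\Im(y_{ik})\,\Im(M(1,2))$, where $M(1,2)$ is the entry coupling $i$ and $k$. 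Taking $M=U_{ik}^{\opt_3}$ and $M=W_{ik}^{\opt_3}$ and using~\eqref{eq:aux} (and $\tr\{Y_{ik}W_{ik}^{\opt_3}\}=\tr\{\hat{Y}_{ik}W_{ik}^{\opt_3}\}$, since $Y_{ik}$ vanishes off that block), both bounds in~\eqref{eq:UpBdPQr} become equivalent to the single claim $|\Im(U_{ik}^{\opt_3}(1,2))|\le|\Im(W_{ik}^{\opt_3}(1,2))|$.

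I would then split on the hypothesis $|u_k|\in\{0,w_k\}$, which amounts to saying that the $k$-terminal of the (near rank-one) matrix $U_{ik}^{\opt_3}$ is either inactive, $U_{ik}^{\opt_3}(2,2)=0$, or carries the true magnitude, $U_{ik}^{\opt_3}(2,2)=w_k^2$. In the first case (and also when $w_k=0$), positive semidefiniteness of $U_{ik}^{\opt_3}$ forces $U_{ik}^{\opt_3}(1,2)=0$, so the left-hand side of the scalar claim vanishes and we are done. It remains to treat the case $U_{ik}^{\opt_3}(2,2)=w_k^2>0$.

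For this case I would write $U:=U_{ik}^{\opt_3}$, $\widehat W:=W_{ik}^{\opt_3}$, and exploit $\Im(z)^2=|z|^2-\Re(z)^2$ applied to the $(1,2)$ entries. Positive semidefiniteness gives $|U(1,2)|^2\le U(1,1)\,w_k^2$ and $|\widehat W(1,2)|^2\le w_i^2w_k^2$; constraint~\eqref{eq:U_con1} gives $U(1,1)\le w_i^2$; constraint~\eqref{eq:const-f} applied to the edge $\{i,k\}\in\E_0$ bounds $\Re(\widehat W(1,2))$ from below; and constraint~\eqref{eq:U_con3}, which here reads $U(1,1)+w_k^2-2\Re(U(1,2))\le w_i^2+w_k^2-2\Re(\widehat W(1,2))$, then bounds $\Re(U(1,2))$ from below. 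Substituting these into the identity yields an upper estimate for $|\Im(U(1,2))|^2$ and a lower estimate for $|\Im(\widehat W(1,2))|^2$, and the voltage-ratio condition~\eqref{eq:volt-diff} is precisely what makes the former dominated by the latter: it keeps the slack $w_i^2-U(1,1)$ allowed by~\eqref{eq:U_con1} small relative to $w_iw_k$, so that~\eqref{eq:U_con3} pins $\Re(U(1,2))$ close enough to $\tfrac12(U(1,1)+w_k^2)$, hence $|\Im(U(1,2))|$ close enough to zero, to sit below $|\Im(\widehat W(1,2))|$.

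The step I expect to be the main obstacle is this last comparison. The off-diagonal entry $\widehat W(1,2)$ of the true-voltage block is only loosely constrained — it enters the problem only through~\eqref{eq:const-f} and~\eqref{eq:U_con3} — so extracting a usable lower bound on $|\Im(\widehat W(1,2))|$ requires a careful simultaneous use of the positive-semidefinite cone, the thermal bound~\eqref{eq:const-f}, and~\eqref{eq:U_con3}, and then checking that the extremal configurations (where $U(1,1)$ is strictly below $w_i^2$, or where the ratio $w_i/w_k$ sits at the boundary permitted by~\eqref{eq:volt-diff}) still satisfy the scalar inequality. Everything preceding that estimate is routine bookkeeping with the definitions and the semidefinite constraints.
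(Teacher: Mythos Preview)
Your reduction in the first two paragraphs is correct and matches the paper's argument exactly: for a purely inductive line the matrices $\hat Y_{ik}$, $\hat Y_{ki}$ have zero diagonal and are negatives of one another, so both inequalities in~\eqref{eq:UpBdPQr} collapse to the single scalar claim $|\Im U_{ik}^{\opt_3}(1,2)|\le |\Im W_{ik}^{\opt_3}(1,2)|$, and the case $|u_k|=0$ is immediate from positive semidefiniteness.

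The gap is in your plan for the case $|u_k|=w_k$. You propose to obtain ``a lower estimate for $|\Im(\widehat W(1,2))|^2$'' from~\eqref{eq:const-f} together with the PSD cone, and then compare it with an upper estimate on $|\Im(U(1,2))|^2$. But no nontrivial lower bound on $|\Im(\widehat W(1,2))|$ is available from those constraints: $\Im(\widehat W(1,2))$ can be exactly zero while all of~\eqref{eq:const-e},~\eqref{eq:const-f} and the PSD constraint hold. Indeed,~\eqref{eq:const-f} gives $\Re(\widehat W(1,2))\ge (w_i^2+w_k^2-\bar V_{ik})/2$, which combined with $|\widehat W(1,2)|^2\le w_i^2w_k^2$ yields an \emph{upper} bound on $|\Im(\widehat W(1,2))|$, the opposite of what you need. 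So the decoupled ``upper bound on one side, lower bound on the other'' scheme cannot close, and this is precisely the obstacle you flag but do not resolve.

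The paper avoids this altogether. It does not attempt to estimate $|\Im(\widehat W(1,2))|$ from below; instead it argues by contradiction in polar coordinates. Writing (under the implicit rank-one reading of $U$ and $W_{ik}^{\opt_3}$) the off-diagonals as $|u_i||u_k|e^{j\theta^u_{ik}}$ and $w_iw_k e^{j\theta^w_{ik}}$, and introducing $\xi_i\ge1$ with $\xi_i|u_i|=w_i$, the hypothesis $|P^{\opt_3}_{ik}|>|\tr\{Y_{ik}W_{ik}^{\opt_3}\}|$ becomes $|\sin\theta^u_{ik}|>\xi_i|\sin\theta^w_{ik}|$, hence $|\cos\theta^u_{ik}|<\sqrt{1-\xi_i^2\sin^2\theta^w_{ik}}$. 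This is then substituted into constraint~\eqref{eq:U_con3}, rewritten in these polar variables, and the resulting one-variable expression in $\xi_i$ is shown to be non-positive by checking its value at $\xi_i=1$ and a derivative/monotonicity argument in which the voltage-ratio hypothesis~\eqref{eq:volt-diff} is what makes the derivative non-positive. The coupling between $U$ and $\widehat W$ through~\eqref{eq:U_con3} is therefore exploited \emph{jointly} inside the contradiction, never as two separate one-sided estimates. If you want to salvage a direct Cartesian argument, you would likewise have to keep $\Re(\widehat W(1,2))$ inside the upper bound for $|\Im(U(1,2))|^2$ coming from~\eqref{eq:U_con3} and compare that $\widehat W$-dependent bound directly with $|\Im(\widehat W(1,2))|^2$, rather than trying to bound the latter from below in isolation; and~\eqref{eq:const-f} plays no role in the paper's proof.
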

\begin{proof}
  If $\{i,k\}$ is purely inductive,
  then
  \begin{align*}
    \hat{Y}_{ik} =\frac{1}{2}
    \begin{bmatrix}
      0 & y_{ik}^*
      \\
      y_{ik} & 0
    \end{bmatrix}, \quad \hat{Y}_{ki} =\frac{1}{2}
    \begin{bmatrix}
      0 & y_{ik}
      \\
      y_{ik}^* & 0
    \end{bmatrix}.
  \end{align*}  
  Note that only the off-diagonal entries of $\hat{Y}_{ik}$ and
  $\hat{Y}_{ki}$ are non-zero, making $P_{ik}^{\opt_3} =
  -P_{ki}^{\opt_3}$ and $|\tr\{Y_{ik}W_{ik}^{\opt_3}\}| =
  |\tr\{Y_{ki}W_{ik}^{\opt_3}\}|$. If $|u_k|=0$,~\eqref{eq:UpBdPQr}
  follows as $P_{ik}^{\opt_3} = -P_{ki}^{\opt_3} = 0$. It is then
  enough to show that if $|u_k|=w_k$, $|P_{ik}^{\opt_3}|\leq
  |\tr\{Y_{ik}W_{ik}^{\opt_3}\}|$. We show it by contradiction. If
  $|P^{\opt_3}_{ik}|>|\tr\{Y_{ik}W_{ik}^{\opt_3}\}|$, then $|u_i|>0 $
  and $ |u_i||u_k||\sin(\theta_{ik}^u)|> w_i
  w_k|\sin(\theta_{ik}^w)|$, where $\theta_{ik}^w = \angle
  W^{\opt_3}_{ik}$, and $\theta_{ik}^u$ is the angle difference
  between $u_i$ and~$u_k$. Using~\eqref{eq:U_con1}, we define
  $\xi_i\geq 1$ such that $\xi_i |u_i| = w_i$, and rewrite the
  inequality as $|\sin(\theta_{ik}^u)|> \xi_i|\sin(\theta_{ik}^w)|$.
  Then,
  \begin{align}\label{eq:contra-1}
    |\cos(\theta_{ik}^u)|< \sqrt{1-\xi_i^2\sin^2(\theta_{ik}^w)}.
  \end{align}
  Rewriting~\eqref{eq:U_con3} as a function of $w_i,w_k,\xi_i$,
  \begin{align}\label{eq:contra-2}
    & 0\leq w_i^2-\frac{w_i^2}{\xi_i^2}
    -2w_iw_k\Big(\cos(\theta_{ik}^w)-\frac{1}{\xi_i}\cos(\theta_{ik}^u)\Big),
  \end{align}
  where we use $|u_k|=w_k$ in the inequality.
  Using~\eqref{eq:contra-1}, the RHS of~\eqref{eq:contra-2} is less
  than
  \begin{align}\label{eq:contra-2r}
    &w_i^2\hspace{-1mm}-\frac{w_i^2}{\xi_i^2} -2w_iw_k \Big(\hspace{-1mm} \cos(\theta_{ik}^w)-\frac{1}{\xi_i}\sqrt{1 -
      \xi_i^2\sin^2(\theta_{ik}^w)}\Big).
  \end{align}
  The derivative~\eqref{eq:contra-2r} with respect to $\xi_i$ is
  \begin{align}\label{eq:derivative}
    &\frac{2 w_i^2}{\xi_i^3}-\frac{4w_iw_k}{\sqrt{1\hspace{-1mm} -
        \hspace{-0.5mm}\xi_i^2\sin^2(\theta_{ik}^w)} } -\frac{2 w_i
      w_k}{\xi_i^2}\sqrt{1\hspace{-1mm} -\hspace{-0.5mm}
      \xi_i^2\sin^2(\theta_{ik}^w)} .
  \end{align}
  The first two elements summed up to a non-positive value due
  to~\eqref{eq:volt-diff}. We then conclude that~\eqref{eq:derivative}
  is non-positive with $W^{\opt_3}_{ik}$ given and
  fixed. Therefore,~\eqref{eq:contra-2r} is non-positive for all
  $\xi_i$ because it is zero when $\xi_i=1$ and is
  non-increasing. But~\eqref{eq:contra-2r} is strictly larger than the
  RHS of~\eqref{eq:contra-2},
  contradicting~\eqref{eq:contra-2}. 
\end{proof}

Condition~\eqref{eq:volt-diff} holds for most existing power
systems~\cite{AJW-BFW:12}.  An analogous result holds by
restricting~$u_i$ instead.

\begin{proposition}\longthmtitle{Bounds on directional power
    flow. II}\label{prop:UpBdPQr-II}
  If $\{i,k\}\in\hat\E_s$ is purely inductive, $|u_i|\in\{0,w_i\}$ and $\bar{V}_{ik}$ is sufficiently
  small such that~\eqref{eq:volt-diff} holds, then~\eqref{eq:UpBdPQr}
  follows.
\end{proposition}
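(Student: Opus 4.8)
The plan is to mirror the proof of Proposition~\ref{prop:UpBdPQr}, exchanging the roles of the two terminal nodes $i$ and $k$. First I would dispatch the hypothesis "$\bar V_{ik}$ sufficiently small implies~\eqref{eq:volt-diff}": by~\eqref{eq:const-f} the optimum satisfies $\tr\{M_{ik}W^{\opt_3}\}=|V_i-V_k|^2\le\bar V_{ik}$, while~\eqref{eq:const-e} and the standard voltage bounds keep $w_i=\sqrt{W^{\opt_3}(i,i)}$ and $w_k=\sqrt{W^{\opt_3}(k,k)}$ bounded below near $1$. Hence for $\bar V_{ik}$ small enough the magnitudes $w_i,w_k$ are forced within a factor of two of one another, which is exactly~\eqref{eq:volt-diff}. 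So it suffices to prove~\eqref{eq:UpBdPQr} assuming~\eqref{eq:volt-diff} holds directly, and the argument reduces to a relabelled copy of the previous proposition.

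As before, since $\{i,k\}$ is purely inductive, $\hat Y_{ik}$ and $\hat Y_{ki}$ are purely off-diagonal, giving $P^{\opt_3}_{ik}=-P^{\opt_3}_{ki}$ and $|\tr\{Y_{ik}W^{\opt_3}_{ik}\}|=|\tr\{Y_{ki}W^{\opt_3}_{ik}\}|$, so it is enough to bound $|P^{\opt_3}_{ik}|$. If $|u_i|=0$ then $P^{\opt_3}_{ik}=0$ and we are done; otherwise $|u_i|=w_i$. Arguing by contradiction, I would assume $|P^{\opt_3}_{ik}|>|\tr\{Y_{ik}W^{\opt_3}_{ik}\}|$, which forces $|u_k|>0$ and $|u_i||u_k||\sin(\theta^u_{ik})|>w_iw_k|\sin(\theta^w_{ik})|$. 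Using~\eqref{eq:U_con2} (rather than~\eqref{eq:U_con1}), introduce $\xi_k\ge1$ with $\xi_k|u_k|=w_k$; this plays the role $\xi_i$ did before, and one obtains $|\sin(\theta^u_{ik})|>\xi_k|\sin(\theta^w_{ik})|$, hence $|\cos(\theta^u_{ik})|<\sqrt{1-\xi_k^2\sin^2(\theta^w_{ik})}$.

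Then I would rewrite~\eqref{eq:U_con3} in the variables $w_i,w_k,\xi_k$, now substituting the exact identity $|u_i|=w_i$, to get the analogue of~\eqref{eq:contra-2} with $w_i$ and $w_k$ interchanged and $\xi_i$ replaced by $\xi_k$; bound its right-hand side from above via the $\cos(\theta^u_{ik})$ inequality to obtain the analogue of~\eqref{eq:contra-2r}; differentiate in $\xi_k$ to get the analogue of~\eqref{eq:derivative}; and invoke~\eqref{eq:volt-diff} — which is symmetric under $i\leftrightarrow k$ — to conclude the first two terms of the derivative sum to a non-positive value, so the expression is non-increasing in $\xi_k$ and vanishes at $\xi_k=1$, hence is non-positive for all $\xi_k\ge1$, contradicting that it must be strictly positive. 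The main obstacle is the same monotonicity check on the derivative as in Proposition~\ref{prop:UpBdPQr}; here it is essentially routine precisely because~\eqref{eq:volt-diff} is invariant under the swap, but I would be careful that the correct box constraint~\eqref{eq:U_con2} is used to define $\xi_k$ and that it is now $|u_i|=w_i$ (an equality), not an inequality, that enters~\eqref{eq:contra-2}, so the "exact" and "bounded" terminals trade places consistently throughout.
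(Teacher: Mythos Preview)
Your proposal is correct and is precisely what the paper intends: it states that the proof of Proposition~\ref{prop:UpBdPQr-II} ``is analogous to that of Proposition~\ref{prop:UpBdPQr} and therefore we omit it.'' Your sketch supplies the obvious relabelling---replacing~\eqref{eq:U_con1} by~\eqref{eq:U_con2}, $\xi_i$ by $\xi_k$, and exploiting the $i\leftrightarrow k$ symmetry of~\eqref{eq:volt-diff}---and your brief justification that small $\bar V_{ik}$ forces~\eqref{eq:volt-diff} is a reasonable unpacking of the hypothesis, so nothing further is needed.
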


The proof of Proposition~\ref{prop:UpBdPQr-II} is analogous to that of
Proposition~\ref{prop:UpBdPQr} and therefore we omit it. Similar
bounds as~\eqref{eq:UpBdPQr} follow for reactive power if the sum of
the cosine terms in the bracket of~\eqref{eq:contra-2} is non-negative
(however, in general, this is not the case). In addition, more
involved, inequalities as~\eqref{eq:UpBdPQr} hold for the general
impedance case, but we do not pursue them
here. Propositions~\ref{prop:UpBdPQr} and~\ref{prop:UpBdPQr-II} show
that, when the diagonal entries of $U_{ik}^{\opt_3}$ are at the
boundary points of their constraints, \textbf{(P3)} eliminates the
bilinear terms on the active line power flow of \textbf{(P2)} in the
same way as~\eqref{eq:knownLP}. The difference between the relaxations
is that there is no need in \textbf{(P3)} to know the direction of
line power flow a priori, as opposed to~\eqref{eq:knownLP}.

\subsection{Reconstructed Solution to the OTS Problem}
We note that the ratio of the voltage magnitudes derived from
$U_{ik}^{\opt_3}$ and $W^{\opt_3}$ provides an approximation of the
discrete variables $\alpha_{ik}$ in~\eqref{eq:nodal_s_power} as
\begin{align}\label{eq:alphas-from-P3}
  \hat\alpha_{ik} = \tr\{U^{\opt_3}_{ik}\}/\tr\{W^{\opt_3}_{ik}\}.
\end{align}
Note that $\hat\alpha \in [0,1]^{|\hat{\E}_s|}$ because
of~\eqref{eq:U_con}. If we round the entries of $\hat\alpha$ to the
closest number in $\{0,1\}$, we obtain a candidate solution
$\hat\alpha_r \in \{0,1\}^{|\hat{\E}_s|}$ to~\textbf{(P2)}. The
following result, whose proof is straightforward, states the
relationship between \textbf{(P2)} and \textbf{(P3)} based on the
rounded solution~$\hat\alpha_r$.

\begin{proposition}\longthmtitle{Properties of the reconstructed
    solution}\label{prop:reconstructed-sol}
  The optimal values of \textbf{(P1)}-$\hat\alpha_r$, \textbf{(P2)},
  and \textbf{(P3)} satisfy $p_1^{\opt} \ge p_2^{\opt} \ge
  p_3^{\opt}$. Moreover, if $p_1^{\opt}= p_3^{\opt}$, then the optimal
  solution of \textbf{(P1)-$\hat\alpha_r$}, $W^{\opt}_1$, combined
  with $\hat\alpha_r$, is an optimal solution of \textbf{(P2)}.
\end{proposition}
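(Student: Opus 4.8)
The plan is to establish the chain of inequalities first and then the optimality transfer statement, both of which rely on viewing \textbf{(P3)} as a relaxation that a feasible point of \textbf{(P2)} lifts into. First I would show $p_2^{\opt} \ge p_3^{\opt}$ by arguing that any feasible solution of \textbf{(P2)} induces a feasible solution of \textbf{(P3)} with the same objective value: given $(W,\alpha)$ feasible for \textbf{(P2)}, set $U_{ik} = \alpha_{ik}\, W^{\opt_3}_{ik}$ — that is, the rank-one (or PSD) $2\times 2$ principal submatrix of $W$ scaled by $\alpha_{ik}\in\{0,1\}$. Since $\alpha_{ik}\in\{0,1\}$, constraints~\eqref{eq:U_con1}--\eqref{eq:U_con3} hold (with equality when $\alpha_{ik}=1$ and trivially when $\alpha_{ik}=0$, using that $W\succeq 0$ so the diagonal entries and the $\hat M$-quadratic form are non-negative), and $\tr\{\hat Y_{ik} U_{ik}\} = \alpha_{ik}\tr\{\hat Y_{ik} W_{ik}\} = \alpha_{ik} P_{ik}$, so~\eqref{eq:conP3-4}--\eqref{eq:conP3-5} reduce exactly to~\eqref{eq:nodal_s_power}. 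Hence the \textbf{(P2)} optimum is feasible for \textbf{(P3)} with equal cost, giving $p_2^{\opt}\ge p_3^{\opt}$.

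Next I would show $p_1^{\opt}\ge p_2^{\opt}$, where $p_1^{\opt}$ denotes the optimal value of \textbf{(P1)}-$\hat\alpha_r$. This is immediate: \textbf{(P1)}-$\hat\alpha_r$ is precisely \textbf{(P2)} with the binary vector $\alpha$ fixed to the particular value $\hat\alpha_r$, so it is a restriction of \textbf{(P2)} and its optimal value cannot be smaller. (One should note \textbf{(P1)}-$\hat\alpha_r$ is feasible whenever the problem data make it so; if it is infeasible the inequality is vacuous, and in the regime of interest $p_1^{\opt}=p_3^{\opt}$ presumes feasibility.) Chaining the two inequalities yields $p_1^{\opt}\ge p_2^{\opt}\ge p_3^{\opt}$.

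For the second assertion, suppose $p_1^{\opt}=p_3^{\opt}$. Then the sandwich forces $p_2^{\opt}=p_1^{\opt}$ as well. Let $W_1^{\opt}$ be an optimal solution of \textbf{(P1)}-$\hat\alpha_r$. Since \textbf{(P1)}-$\hat\alpha_r$ is \textbf{(P2)} with $\alpha$ pinned to $\hat\alpha_r$, the pair $(W_1^{\opt},\hat\alpha_r)$ is feasible for \textbf{(P2)} and attains objective value $p_1^{\opt}=p_2^{\opt}$; being feasible and achieving the optimal value, it is an optimal solution of \textbf{(P2)}. I expect the only subtlety — and the step to state carefully rather than the "main obstacle," since the argument is genuinely short — is the verification that the lift $U_{ik}=\alpha_{ik}W^{\opt_3}_{ik}$ (more precisely, using the corresponding submatrix of the \textbf{(P2)}-feasible $W$) satisfies~\eqref{eq:U_con}, which hinges on $W\succeq 0$ guaranteeing $\tr\{M_iW\}\ge 0$ and $\tr\{M_{ik}W\}\ge 0$; everything else is bookkeeping with the definitions of $\hat Y_{ik}$, $\hat{\bar Y}_{ik}$ and the fact that $\alpha_{ik}^2=\alpha_{ik}$ on $\{0,1\}$.
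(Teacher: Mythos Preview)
Your proposal is correct and matches what the paper intends: it explicitly states the result ``whose proof is straightforward'' and omits the argument, and your lift $U_{ik}=\alpha_{ik}W_{ik}$ together with the observation that \textbf{(P1)}-$\hat\alpha_r$ is \textbf{(P2)} with $\alpha$ frozen is exactly the routine verification. The only cosmetic point is your initial notation $U_{ik}=\alpha_{ik}W^{\opt_3}_{ik}$, which should read $\alpha_{ik}W_{ik}$ for the \textbf{(P2)}-feasible $W$, as you yourself note parenthetically; tighten that in the final write-up.
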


Note that even if $\hat \alpha = \hat \alpha_r
\in\{0,1\}^{|\hat{\E}_s|}$, $p_3^{\opt}$ does not necessary
equal~$p_2^{\opt}$. The reason is that~\eqref{eq:alphas-from-P3}
computes~$\hat \alpha_{ik}$ from the diagonal of~$U^{\opt_3}_{ik}$,
and hence we can not conclude any equality for the off-diagonal
elements of $U^{\opt_3}_{ik}$ and $W^{\opt_3}_{ik}$.  Hence, even if
$\hat \alpha\in\{0,1\}^{|\hat{\E}_s|}$, the optimal solution of
\textbf{(P3)} does not necessarily lie in the feasible region of
\textbf{(P2)}.


\begin{remark}\longthmtitle{Comparison with the McCormick
    relaxation}\label{rem:McC}
  {\rm We explain how we implement the McCormick relaxation on the
    problem~\textbf{(P2)} for comparison purposes.  For each
    $\{i,k\}\in\hat{\E}_s$, we define new variables $\hat{P}_{ik},
    \hat{Q}_{ik}\in\real$ to substitute the bilinear terms
    $\alpha_{ik}P_{ik}, \alpha_{ik}Q_{ik}$, respectively. Then, we
    impose constraints of the form~\eqref{eq:McCormick} on the new
    variables based on $\alpha_{ik}\in\{0,1\}$ and upper and lower
    bounds of active/reactive line power flow,
    $\bar{P}_{ik},\bar{Q}_{ik}\in\real_+$, $\underline{P}_{ik} =
    -\bar{P}_{ik},\underline{Q}_{ik}=-\bar{Q}_{ik}$. 
    If these bounds are far from the actual optimal line power flows,
    this can significantly affect the quality of the solution obtained
    by the McCormick relaxation, a point that we illustrate later in
    our simulations, along with rationale for how to select them.  In
    contrast, the proposed relaxation \textbf{(P3)} is not sensitive
    to those line power bounds, as the virtual-voltages are bounded by
    the power computed from $W$.
    Additionally, the variables $\hat{P}_{ik}$ and $\hat{Q}_{ik}$ in
    the McCormick relaxation are loosely tied to the decision
    variable~$W$, whereas \textbf{(P3)} introduces
    constraints~\eqref{eq:U_con1}-\eqref{eq:U_con3} enforcing a
    stronger physical connection between the virtual-voltages and~$W$. 
    McCormick relaxation 
  } \oprocend
\end{remark}

\rev{
	
  \subsection{$N$-1 Security Constraints}\label{sec:SCOTS}
  The virtual-voltage approximation approach described above can also
  be applied to scenarios with security constraints.  $N$-1 security
  formulations are widely considered in the
  literature~\cite{EBF-RPO-MCF:08,KWH-RPO-EBF-SSO:09,MK-HG-MD:13} and require that,
  under any single component outage (most commonly failures of a
  generator or a transmission line), power flow constraints remain
  satisfied. This prevents cascading failures and makes it easier to
  implement post-contingency controls. The proposed virtual-voltage
  approach can easily accommodate $N$-1 security constraints as we
  explain next. Let $\Co:=\{0,1,\cdots,c\}$ be the set that enumerates
  contingency scenarios, with index $t =0$ corresponding to the
  nominal operating scenario. We formulate the virtual-voltage
  convexifed OTS with $N$-1 security constraints as follows
  \begin{flalign*}
    \centering
    \min_{\substack{W^{[t]}\succeq 0, t\in\Co, \\
        U_{ik}\succeq 0\; \forall \{i,k\}\in\hat\E_s}}
    \sum\nolimits_{i \in \N_G} \Big(c_{i2} P_{i0}^2+ c_{i1}
    P_{i0}\Big), &&
  \end{flalign*}
  subject to the following for all $t\in\Co$
  \begin{subequations}
    \begin{alignat*}{2}
      &U_{ik}(1,1) \leq \tr\{M_iW^{[t]}\}, \;\forall \{i,k\} \in
      \mathcal{E}_s
      \\
      &U_{ik}(2,2) \leq \tr\{M_kW^{[t]}\}, \;\forall \{i,k\} \in
      \mathcal{E}_s
      \\
      &\tr\{\hat{M}U_{ik} \} \leq \tr\{M_{ik}W^{[t]} \} \;\forall
      \{i,k\} \in \mathcal{E}_s
      \\
      &\underline{P}_i \leq P_{it} \leq \bar{P}_i, \;\;
      \underline{Q}_i \leq Q_{it} \leq \bar{Q}_i, \; \forall
      i\in\N^{[t]},
      \\
      &\underline{V}_{i}\leq \tr\{M_{i}W^{[t]} \} \leq \bar{V}_{i},
      \;\forall i\in \N^{[t]},
      \\
      & \tr\{M_{ik}W^{[t]} \} \leq \bar{V}_{ik}, \;\forall \{i,k\} \in
      \mathcal{E}^{[t]},
      \\
      & P_{it} = \tr\{Y_{i}^{[t]}W^{[t]}\} + P_{D_i}+
      \sum\nolimits_{k\in\N^{[t]}_{i,s}}\tr\{\hat{Y}_{ik}U_{ik} \},
      \\
      &Q_{it} = \tr\{\bar{Y}_{i}^{[t]}W^{[t]}\} + Q_{D_i}+
      \sum\nolimits_{k\in\N^{[t]}_{i,s}}\tr\{\hat{\bar{Y}}_{ik}U_{ik}
      \},
    \end{alignat*}
  \end{subequations}
  where $\E^{[t]}$ is the set of connected lines for contingency
  $t\in\Co$. Each $W^{[t]}$ corresponds to the solution of the
  contingency $t\in\Co$. All the constraints involved in~\textbf{(P3)}
  are imposed on every $W^{[t]}$. Since there is no coupling between
  $W^{[t_1]}$ and $W^{[t_2]}$ for different $t_1,t_2\in\Co$, all the
  properties of the virtual-voltage approximation characterized above
  remain valid. The problem size of the security constrained OTS grows
  linearly with respect to $|\Co|$
}

\section{Partition-Based OTS Algorithm}\label{sec:propMIPsol}

\rev{The virtual-voltage approach described in
  Section~\ref{sec:bilinear} finds a candidate switching,
  cf.~\eqref{eq:alphas-from-P3} for the OTS problem. This, together
  with the solution to the convex relaxation \textbf{(P3)}, provide
  upper and lower bounds on the optimal value of the OTS problem, cf.
  Proposition~\ref{prop:reconstructed-sol}.  Here, we discuss how to
  refine the reconstructed solution to better approximate the solution
  of the original optimization problem.  One approach consists of
  using the branch-and-bound algorithm~\cite{IEG:02} and relying on
  \textbf{(P3)} to generate the required branch lower bounds in its
  execution. However, this approach can easily become intractable as
  the number 
  of switchable lines grows because of the large number of cases where
  \textbf{(P3)} must be executed. Instead,
we propose the partition-based OTS algorithm shown in
Algorithm~\ref{algo:partition_int}.
The method relies on graph
  partitioning and is not directly applicable to security constrained
  OTS formulations, cf. Section~\ref{sec:SCOTS}. 
}

\begin{algorithm}
  \caption{Partition-Based OTS Algorithm}\label{algo:partition_int}
\rev{
  \begin{algorithmic}[1]
    \State \textbf{Compute} the optimal solution $W^{\opt}$ of
    \textbf{(P3)}
    \State \textbf{Construct} graph reduction $\graph_r$
    (Section~\ref{sec:graph-reduction})
    \newline
    \Comment{\textit{Clusters together nodes connected by switchable
        lines}}
    \State \textbf{Assign}  adjacency matrix to $\graph_r$
    (Section~\ref{sec:g-partition})
    \newline
    \Comment{\textit{Sets weights according to edge influence on
        optimal solution}}
    \State \textbf{Compute} cut set $\E_c$ to partition $\graph_r$
    into $n$ subgraphs (Section~\ref{sec:g-partition})
    \newline
    \Comment{\textit{Partitions network graph into smaller components
        accounting for impact on optimal solution}}
    \State \textbf{Solve}  integer optimization problem \textbf{(P4)} on each
    subgraph to find $\alpha_p^{\opt}$
    (Section~\ref{sec:optimization-on-subgraphs})
    \newline
    \Comment{\textit{Solves subproblems of smaller size}}
    \State \textbf{Solve} \textbf{(P1)}-$\alpha_p^{\opt}$
    (Section~\ref{sec:optimization-fixed-topology})
    \newline
    \Comment{\textit{Reconstructs solution of original problem}}
  \end{algorithmic}
}
\end{algorithm}

\subsubsection{Graph reduction}\label{sec:graph-reduction}

This is a step prior to graph partitioning which is motivated by the
following observation. The graph partitioning should not result in
nodes connected by a switchable line belonging to different
subgraphs. This is because, if that were the case, then solving the
OPF associated with each subgraph cannot capture how the switch in
that specific line affects the optimal value of the original OTS
problem. To address this, we `hide' the nodes that are connected
by~$\E_s$ to the partitioning algorithm that finds the edge cut~$\E_c$
so as to ensure $\E_s\cap\E_c = \emptyset$.  Let $\N_s:=\setdef{i \in
  \N}{ \{i,k\}\in\E_s }$ and let $\N_{s,i}$ be the set of nodes that
are connected to node $i\in\N_s$ through a line in~$\E_s$. All nodes
in $\N_{s,i}$ are clustered as one representative node and all the
edges connected to one of $\N_{s,i}$ are considered being connected to
the representative node. This results in a graph $\graph_r =
\big((\N\setminus\N_s)\cup\V,\E_r\big)$, where $\V$ is the collection
of representative nodes. Notice that $\E_r\subseteq\E$ and $\E_r$ is a
strict subset of $\E$ if there is $\{i,k\}\in\E$ such that a path
connecting nodes $i$ and $k$ exists in the graph $(\N,\E_s)$.
Figure~\ref{fig:graph_s} illustrates the construction of~$\graph_r$
and has $\E_r\subset \E$ as one edge of $\E$ is dropped in the process
of graph reduction.

\begin{figure}
  \centering
  \includegraphics[width=0.35\textwidth]{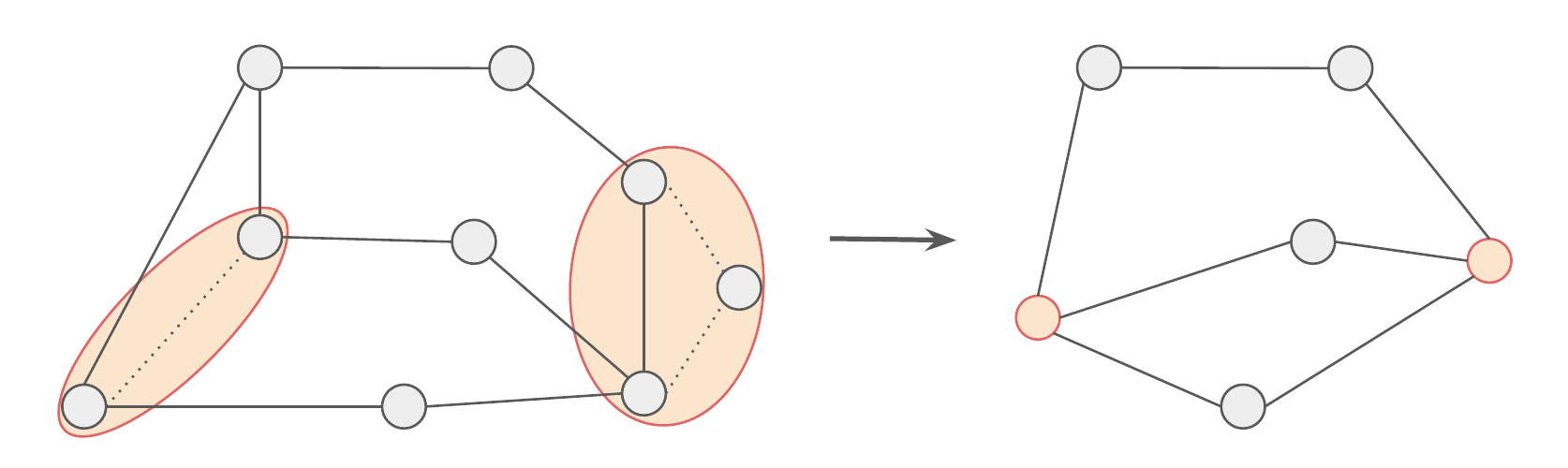}
  \caption{Graph reduction.  Nodes connected by~$\E_s$ are collapsed
    into one node. The dash lines denote edges in~$\E_s$; the solid
    lines denote the edges in~$\E$.}\label{fig:graph_s}
  \vspace*{-3ex}
\end{figure}

The graph reduction step described above only makes sense when not all
lines are switchable because otherwise, it results in a graph with a
single node.  For OTS scenarios where all lines are switchable, one
can either skip the graph reduction step, or pre-select a set of lines
that should remain non-switchable.
	
\subsubsection{Graph partitioning}\label{sec:g-partition}
Our next step is to find an edge cut set $\E_c$ of the graph
$\graph_r$.  In order to minimally affect the optimal value
$p^{\opt}$, the graph partitioning is based on the optimal dual
variables of \textbf{(P3)}.  The optimum dual variables measure how
the optimal value $p^{\opt}_3$ of \textbf{(P3)} changes with respect
to the corresponding constraint. Formally, by taking the derivative of
the Lagrangian of \textbf{(P3)}, we have the following for
$ i\in\N\setminus\N_s$,
\begin{align*}
  &\underline{\lambda}_i^{\opt_3} \hspace{-1mm}= \frac{\partial p^{\opt}_3}{\partial
    \underline{P}_i}, \; \bar{\lambda}_i^{\opt_3} \hspace{-1mm}= \frac{\partial
    p^{\opt}_3}{\partial \bar{P}_i}, \;
\underline{\gamma}_i^{\opt_3}\hspace{-1mm} = \frac{\partial p^{\opt}_3}{\partial
 	\underline{Q}_i}, \; \bar{\gamma}_i^{\opt_3} \hspace{-1mm}= \frac{\partial
 	p^{\opt}_3}{\partial \bar{Q}_i},
\end{align*}
and for $ i\in\V$,
\begin{align*}
  &\underline{\lambda}_i^{\opt_3} = \sum_{k \in \N_{s,i}}\frac{\partial p^{\opt}_3}{\partial
    \underline{P}_k}, \; \bar{\lambda}_i^{\opt_3} = \sum_{k \in \N_{s,i}}\frac{\partial
    p^{\opt}_3}{\partial \bar{P}_k}, \\
&\underline{\gamma}_i^{\opt_3} = \sum_{k \in \N_{s,i}}\frac{\partial p^{\opt}_3}{\partial
	\underline{Q}_k}, \; \bar{\gamma}_i^{\opt_3} = \sum_{k \in \N_{s,i}}\frac{\partial
	p^{\opt}_3}{\partial \bar{Q}_k},
\end{align*}
With this interpretation, we define edge weights as follows
\begin{align*}
  A(i,k)=
  \begin{cases}
    \underset{l
      \in\{i,k\}}{\sum}\hspace{-2mm}\bar\lambda_l^{\opt_3}\hspace{-1mm}
    + \underline\lambda_l^{\opt_3}\hspace{-1mm} +\bar\gamma_l^{\opt_3}
    \hspace{-1mm}+ \underline\gamma_l^{\opt_3}, & \{i,k\}\in\E_r,
    \\
    0, & \text{otherwise}.
  \end{cases}
\end{align*}
Given the adjacency matrix~$A$, we do an $n$-optimal partition on
$\graph_r$, which gives $\graph_r[\V^0_1],\cdots,\graph_r[\V^0_n]$
with $\cup_{i=1}^n\V^0_i = (\N\setminus\N_s)\cup\V$. Since all the
removed edges are in $\E$, we can use the same cut for the partition
of $\graph$: $\graph[\V_1],\cdots,\graph[\V_n]$ with
$\cup_{i=1}^n\V_i = \N$. Such partition ensures
$\E_c\cap \E_s = \emptyset$. The intuition is that the cut minimally
perturbs $p^{\opt}$ because it select edges with minimal weight for
the weighted graph~$(\graph,A)$.

Finding the optimal cut set is NP-hard. There are
algorithms~\cite{JPH:04b,AA-GK:06} that can approximate it in a few
seconds for graphs of the order of a thousand nodes. However, they do
not guarantee that the resulting subgraphs are connected.  To ensure
this property, we resort to spectral graph partitioning.

\begin{theorem}\longthmtitle{Fiedler's theorem of connectivity of
    spectral graph partitions}
  The two subgraphs resulting from spectral graph partitioning on a
  connected graph are also connected.
\end{theorem}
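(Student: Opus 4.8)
This is Fiedler's classical connectivity theorem for the Fiedler vector, and I would prove it by a contradiction argument built on the Courant--Fischer (min--max) characterization of the algebraic connectivity. Write $L$ for the Laplacian and $\lambda_2$ for its second smallest eigenvalue, so the Fiedler vector $y$ satisfies $Ly=\lambda_2 y$; since $\graph$ is connected, $\lambda_2>0$ and $0$ is a simple eigenvalue of $L$ with eigenvector $\mathbf{1}$. Because $y\perp\mathbf{1}$ and $y\neq 0$, the vector $y$ has at least one strictly positive and at least one strictly negative entry, so both $\N_1=\{i:y_i>0\}$ and $\N_2=\{i:y_i\le 0\}$ are nonempty. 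The two parts are symmetric under $y\mapsto -y$, so it suffices to show that $\graph[\N_2]$ is connected.

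Suppose it is not, and write $\N_2=A\sqcup B$ with $A,B$ nonempty and no edge of $\graph$ joining $A$ to $B$; then every neighbor of a vertex of $A$ lying outside $A$ lies in $\N_1$. First I would observe that each of $A$ and $B$ contains a vertex with $y_i<0$: if $A$ carried only zero entries of $y$, then evaluating $Ly=\lambda_2 y$ at a vertex of $A$ adjacent to $\N_1$ (one exists, since $\graph$ is connected and $A\subsetneq\N$) would give $0$ on the left and a strictly positive sum on the right. Now let $z_A$ be the vector equal to $-y_i$ on $A$ and $0$ elsewhere, and define $z_B$ analogously on $B$; both are nonnegative and nonzero. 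A short computation using $Ly=\lambda_2 y$, the absence of $A$--$B$ edges, and the signs $y_i\le 0$ on $A$, $y_j>0$ on $\N_1$, gives $z_A^\top L z_A\le\lambda_2\|z_A\|^2$, with equality only if every vertex of $A$ adjacent to $\N_1$ has $y_i=0$; the same holds for $z_B$. Moreover $z_A$ and $z_B$ have disjoint supports, so $\langle z_A,z_B\rangle=0$, and the absence of $A$--$B$ edges also gives $z_A^\top L z_B=0$. Hence on the two dimensional subspace $\mathcal{U}=\mathrm{span}\{z_A,z_B\}$ one has $u^\top L u\le\lambda_2\|u\|^2$ for every $u\in\mathcal{U}$.

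In the generic case in which both of these inequalities are strict — equivalently, some negative vertex of $A$ and some negative vertex of $B$ touch $\N_1$ — this produces a two dimensional subspace $\mathcal{U}$ with $u^\top L u<\lambda_2\|u\|^2$ for all nonzero $u\in\mathcal{U}$, contradicting the Courant--Fischer fact that every two dimensional subspace contains a nonzero $u$ with $u^\top L u\ge\lambda_2\|u\|^2$; this proves $\graph[\N_2]$ connected, and applying the argument to $-y$ proves $\graph[\N_1]$ connected. The step I expect to be the real obstacle is the degenerate case, in which one of these inequalities is not strict (which can occur when the relevant boundary vertices carry zero entries of $y$); there one must argue more carefully — for instance by introducing the nonnegative matrix $cI-L$ for large $c$ and using the equality case of Cauchy interlacing for its principal block on $A\cup B$, whereby the two block Perron vectors assemble into a genuine second eigenvector of $cI-L$ supported on $A\cup B$, contradicting simplicity of its largest eigenvalue once one checks it is not a multiple of $\mathbf{1}$, or by invoking the discrete nodal domain theorem. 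I would present the result under the assumption that $\lambda_2$ is simple, which is the setting in which the partitioning algorithm has a well-defined Fiedler vector, and remark that otherwise a zero entry of $y$ can genuinely split $\N_1$, as already happens for the star $K_{1,3}$.
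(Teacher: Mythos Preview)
The paper does not actually prove this theorem: its entire ``proof'' is the sentence ``The proof is available in [Corollary~2.9, Fiedler~1975].''  Your proposal therefore goes well beyond what the paper does, and there is nothing in the paper to compare your argument against line by line.

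That said, your variational route---building test vectors $z_A,z_B$ supported on the putative components and confronting them with the Courant--Fischer characterization of $\lambda_2$---is the standard modern way to reach Fiedler's result, and the computations you outline are correct: with no $A$--$B$ edges one indeed gets $z_A^\top L z_B=0$, $\langle z_A,z_B\rangle=0$, and $\lambda_2\|z_A\|^2-z_A^\top L z_A=\sum_{i\in A,\;j\in\N_1}w_{ij}(-y_i)y_j\ge 0$, with equality only when every boundary vertex of $A$ has $y_i=0$.  Your identification of the degenerate equality case as the genuine obstacle is accurate, and your suggested repair via the Perron--Frobenius structure of $cI-L$ is exactly the device Fiedler himself uses.

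You also put your finger on a point the paper glosses over.  Fiedler's Corollary~2.9 guarantees connectivity of the \emph{non-strict} level sets $\{i:y_i\ge 0\}$ and $\{i:y_i\le 0\}$.  With the paper's convention $\N_2=\{i:y_i\le 0\}$ this is covered directly, but $\N_1=\{i:y_i>0\}$ is the \emph{strict} set and is not covered without an extra hypothesis.  Your $K_{1,3}$ example is correct: with the Fiedler vector $(0,1,1,-2)$ (center listed first) one obtains $\N_1$ equal to two non-adjacent leaves, so $\graph[\N_1]$ is disconnected.  Hence the theorem as phrased in the paper is only valid under the simplicity assumption on $\lambda_2$ that you propose (or, more weakly, when the chosen Fiedler vector has no zero entries); the bare citation does not establish the strict-inequality half.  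Presenting the result under that assumption and flagging the counterexample, as you do, is the right call.
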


The proof is available in~\cite[Corollary 2.9]{MF:75}.
To derive a $n$-partition, one can implement spectral graph
partitioning recursively~$n$ times.
%
%
Since we aim for subgraphs with similar size, each iteration applies
spectral graph partitioning on the subgraph with the largest number of
nodes. The most computationally expensive step in this process is the
eigenvector computation, which only takes linear time, or
$\mathcal{O}(n)$. This low complexity is reflected on the negligible
computational time in our simulations.  \rev{ Even though this
  recursive spectral partitioning does not in general lead to an
  $n$-optimal partition, we can characterize a lower bound for the sum of weights 
%
%
 for
  each iteration $l$ of the recursive partitioning by
\begin{align}\label{eq:LB_partition}
  \frac{1}{2} C_{\opt,l}^2 \leq \sum_{\{i,k\}\in\E_{c,l}}A(i,k),
\end{align}
where $C_{\opt,l}$ is the optimal value for $2$-optimal partitioning
and $\E_{c,l}$ is the edge cut set in iteration~$l$ (without loss of
generality, we have assume $A$ is
normalized). Inequality~\eqref{eq:LB_partition} follows directly from
Cheeger inequalities~\cite{FC:07} and the Courant-Fischer
Theorem~\cite{BM-YA-GC-OO:91}.
%
%
}

\subsubsection{Integer optimization on
  subgraphs}\label{sec:optimization-on-subgraphs}
Given a $n$-partition $\{\graph[\V_l]\}_{l=1}^n$, we define an
optimization problem associated with each subgraph. This problem is a
variant of \textbf{(P2)} that is convenient for the reconstruction of
the solution of \textbf{(P2)} over the original~$\graph$. For subgraph
$l$, let $\E_l$ be its set of edges, $W_l\in\Sn_+^{|\V_l|}$ the
decision variable, $\hat{\E}_{s,l}$ the set of switchable lines, and
$\B_l$ the set of nodes in $\V_l$ that connects to at least one node
of another subgraph.  Each subgraph $l$ solves
\begin{flalign*}
  \textbf{(P4)} \min_{\substack{W_l\succeq 0,
      \alpha_{ik}\in\{0,1\},\;
      \forall \{i,k\}\in\hat\E_{s,i}}} \sum\nolimits_{i \in \N_G\cap\V_l}
  \Big(c_{i2} P_i^2+ c_{i1} P_i\Big), &&
\end{flalign*}
subject to
\begin{subequations}\label{eq:conP4}
  \begin{alignat*}{6}
    & \underline{P}_i \leq P_i \leq \bar{P}_i, \quad \underline{Q}_i \leq Q_i \leq \bar{Q}_i , \; \forall i\in \V_l,
    \\
    & \underline{V}_i^2 \leq \tr\{M_iW_l\} \leq \bar{V}_i^2, \;
    \forall i\in \V_l , \displaybreak[0]
    \\
    & \tr\{M_{ik}W_l\} \leq \bar{V}_{ik}, \; \forall \{i,k\} \in \E_l.
    \\ 
    &\nonumber \text{For all }i\in\V_l\setminus\B_l,
    \\
    &P_i = \tr\{Y_iW_l\} + P_{D_i}+ \hspace{-1mm}\sum\nolimits_{k,
      \{i,k\}\in\E_{s,i}}\hspace{-3mm}\alpha_{ik}\tr\{Y_{ik}W_{l,ik}
    \},
    \\
    &Q_i = \tr\{\bar{Y}_iW_l\} + Q_{D_i}+ \hspace{-1mm}\sum\nolimits_{k,
      \{i,k\}\in\E_{s,i}}\hspace{-3mm}\alpha_{ik}\tr\{\bar{Y}_{ik}W_{l,ik}
    \}.  
    \\
    &\nonumber \text{For all }i\in\B_l,
    \\
    &P_i = \tr\{Y_iW_l\} + P_{D_i} + \Pa_{l,i}+ \hspace{-5mm}\sum_{k,
      \{i,k\}\in\E_{s,i}}\hspace{-5mm}\alpha_{ik}\tr\{Y_{ik}W_{l,ik}
    \}, 
    \\
    &Q_i = \tr\{\bar{Y}_iW_l\}\hspace{-.5mm} +\hspace{-.5mm} Q_{D_i} +
    \Q_{l,i}+ \hspace{-5mm}\sum_{k,
      \{i,k\}\in\E_{s,i}}\hspace{-5mm}\alpha_{ik}\tr\{\bar{Y}_{ik}W_{l,ik}
    \}, 
  \end{alignat*}
\end{subequations}
where $\Pa_{l,i} = \sum_{k\in\N\setminus\V_l,\{i,k\}\in\E}
P_{ik}^{\opt_3}$ sums the active power flow from the solution of
\textbf{(P3)}, $\Q_{l,i}$ is defined similarly, and with a slight
abuse of notation, all $M_i$, $M_{ik}$, $Y_i$, $\bar{Y}_i$, $Y_{ik}$
take proper dimensions matching $W_l$. Notice that \textbf{(P4)} is
still NP-hard due to $\alpha_{ik}$, but the number of switches
$|\E_{s,i}|$ in each partition is less than $|\E_s|$, and decreases
with~$n$.  The addition of $\Pa_{l,i}$ and $\Q_{l,i}$ in \textbf{(P4)}
accounts for the coupling between $\graph[\V_l]$ and the other
subgraphs. For each subgraph, these terms are constant and do not
provide an exact approximation of the power exchanged between the
subgraphs -- since they do not take into account the dependency of the
terminal voltage on the solutions determined on the other
subgraphs. Therefore, putting together the solutions obtained for each
subgraph may not result in a feasible solution of \textbf{(P2)}, but
rather a solution to \textbf{(P2)} with a perturbation
on~\eqref{eq:const-c}. We address this next.

\subsubsection{Full SDP optimization with fixed
  topology}\label{sec:optimization-fixed-topology}
In the last step, we define the candidate optimal switch
$\alpha_{p}^{\opt}\in\{0, 1\}^{|\E_s|}$ from the solutions of
\textbf{(P4)} obtained in the previous step.  With this in place, we
solve \textbf{(P1)}-$\alpha^{\opt}_p$ to obtain the candidate optimal
solution~$W^{\opt}_p$ and output $(\alpha^{\opt}_p, W^{\opt}_p)$ as
the reconstructed solution of~\textbf{(P2)}.


\vspace*{-1ex}
\section{Simulation Studies}\label{sec:sim}

In this section we illustrate the performance of the virtual-voltage
approximation and the partition-based OTS algorithm on standard IEEE
test systems. All simulations are done on a desktop with 3.5GHz CPU
and 16GB RAM, using MATLAB and its CVX toolbox~\cite{MG-SB:14-cvx} to
solve the convex optimization problems.  In all our tables except the last row in Table~\ref{tab:BARON}, ``lower
bound'' refers to the optimal value of \textbf{(P3)} and ``upper
bound'' refers to the optimal value of \textbf{(P1)}-$\alpha$, where
$\alpha$ is determined by the corresponding method.


\rev{ 
\subsection{Comparison with McCormick Relaxation}
We implement the virtual-voltage approximation and compare its
performance against the solution obtained from \textbf{(P2)} with the
McCormick approximation (cf.  Remark~\ref{rem:McC}). For the latter,
we use two different estimates on the upper bounds of the line power
flows. In one case, we use the conservative bounds $\bar{P}_{ik}=
-\underline{P}_{ik} = 5$(p.u.) and $\bar{Q}_{ik} =
-\underline{Q}_{ik}= 5$(p.u.) for all $\{i,k\}\in\hat{\E}_s$. These
bounds come from the heuristic estimation on the largest line
active/reactive power flow. In the other case, we set the tighter
bounds $\bar{P}_{ik}= -\underline{P}_{ik} = 1$(p.u.)  and
$\bar{Q}_{ik} = -\underline{Q}_{ik}= 0.5$(p.u.) for all
$\{i,k\}\in\hat{\E}_s$, based on our knowledge of the solution of the
IEEE test nominal case.  We include an additional cost function on the
line power losses to promote optimal solutions with some edges
disconnected.  For each test case, a set of switchable lines are
selected by the following policy: given a design parameter $p \in
\Pint$, we rank all lines by the norm of their admittance in ascending
order and select the first $p$ as the set of switchable lines.
The rationale for this selection is that each line with small
admittance places a small correlation between its terminal nodes and,
as a result, they are more likely to be disconnected. The selection of
$p$ is based on the size of the network.

\begin{table*}[htb!]
  \begin{center}
    \begin{tabular}{c c | c |c |c |c |c |c |}
    	\cline{3-8}
    	&& IEEE 30 & IEEE 39 & IEEE 57 & IEEE 89 & IEEE 118 & IEEE 300
    	\\
    	\cline{2-8}
    	& \multicolumn{1}{|c|}{$\#$ of switches} & 5 & 5 & 5 & 30 & 40 & 40 
    	\\
    	\hline
      \multicolumn{1}{|c}{
        \multirow{2}{*}{
          Virtual-voltage approximation}}&
      \multicolumn{1}{|c|}{lower bound}  & 1265 (1)
       & 135003 (1) & 50912 (1) & 179325 (1) & 151594 (3) & 1086369 (1)
      \\
      \cline{3-8} \multicolumn{1}{ |c  }{}
      &
      \multicolumn{1}{|c|}{upper bound}
      & 1270 (1) & 135303 (1) & 52267 (2) & 188896 (1) & 152707 (1) & 1096117 (1)
      \\
      \hline
      \multicolumn{1}{|c}{\multirow{2}{*}{
          McCormick relaxation w/ $5$(p.u.) bounds}}&
      \multicolumn{1}{|c|}{lower bound}  & 1046 (1) & 133591 (4) & 50209 (1) & 177643 (1) & 146466 (1) & 1054130 (2)
      \\
      \cline{3-8} \multicolumn{1}{ |c  }{}
      &       \multicolumn{1}{|c|}{upper bound}
      & 4654 (3) & 135660 (1) & 52267 (2) & 230355 (1) & 153477 (1) & NaN
      \\ 
      \hline
      \multicolumn{1}{|c}{
        \multirow{2}{*}{McCormick relaxation w/ $1$(p.u.) bounds} } &
      \multicolumn{1}{|c|}{lower bound}  &  1046 (1)  &  133591 (1) & 50209 (1) & NaN & 147919 (1) & NaN
      \\
      \cline{3-8} \multicolumn{1}{ |c  }{}
      &       \multicolumn{1}{|c|}{upper bound}
      & NaN & 135303 (1) & 67667 (2) & NaN & 152484 (1) & NaN
      \\ 
      \hline
    \end{tabular}
  \end{center}
  \caption{Performance of the virtual-voltage  and the
    McCormick approximation. The integer value in parentheses
    is the rank of the SDP solutions.  
    ``NaN'' means  that CVX cannot find a feasible solution (even
    though the problem may still be feasible, see~\cite{MG-SB-YY:11}
    for details).}\label{tab:GivenSw} 
  \vspace*{-1.5ex}
\end{table*}
The virtual-voltage approach yields discrete variables close to
$\{0,1\}$ for all the test cases and, in contrast, the McCormick
relaxation has most of them around~$0.5$. Table~\ref{tab:GivenSw}
shows the values obtained by both approximations, and confirms that
the virtual-voltage approach gives better solutions than the McCormick
relaxation. It is worthwhile to note that every virtual voltage matrix
$U$ has its condition number comfortably more than 1000 for all the
test cases. The smallest condition number is 1492. This validates the
statements of Lemma~\ref{lem:rank1-U} and Remark~\ref{rem:rank1-U} for
a physically meaningful virtual voltage.

In both the McCormick relaxation and virtual-voltage methods, the
number of decision variables grows linearly with respect
to~$p$. Though $p$ is relatively small compared to the dimension of
$W$ and is not the main factor for optimization complexity, we observe
in Table~\ref{tab:McC-VV} that for a given $p$, the McCormick
relaxation returns a solution noticeably faster than the
virtual-voltage method. The reason is that the McCormick relaxation
does not introduce LMIs for the switchable lines. In this regard, the
virtual-voltage approach trades time complexity for better-quality
solutions.
\begin{table*}[htb!]
  \begin{center}
    \begin{tabular}{c c | c |c |c |c | c| c|}
      \cline{3-8}
      && IEEE 30 & IEEE 39 & IEEE 57 & IEEE 89 & IEEE 118 & IEEE 300
      \\
      \cline{2-8}
      & \multicolumn{1}{|c|}{$\#$ of switches} & 5 & 5 & 5 & 30 & 40 & 300
      \\
      \hline
      \multicolumn{1}{|c}{
        \multirow{2}{*}{
          Virtual-voltage approximation}}&
      \multicolumn{1}{|c|}{lower bound}  & 7.78
      & 9.55 & 18.72 & 59.01 & 149.37  & 2668
      \\
      \cline{3-8} \multicolumn{1}{ |c  }{}
      &
      \multicolumn{1}{|c|}{upper bound}
      & 5.52 & 6.71 & 12.63 & 39.05  & 88.55 & 2356
      \\
      \hline
      \multicolumn{1}{|c}{\multirow{2}{*}{
          McC. relaxation w/ $5$(p.u.) bounds}}&
      \multicolumn{1}{|c|}{lower bound}  & 4.93 & 7.69 & 12.92 & 57.19 & 99.19 & 2394
      \\
      \cline{3-8} \multicolumn{1}{ |c  }{}
      &       \multicolumn{1}{|c|}{upper bound}
      & 5.1 & 5.8 & 12.62 & 44.48 & 88.28 & 2250
      \\ 
      \hline
      \multicolumn{1}{|c}{
        \multirow{2}{*}{McC. relaxation w/ $1$(p.u.) bounds} } &
      \multicolumn{1}{|c|}{lower bound}  &  6.24  &  5.8 & 13.34 & 94.51 & 104.18 & 2338
      \\
      \cline{3-8} \multicolumn{1}{ |c  }{}
      &       \multicolumn{1}{|c|}{upper bound}
      & 5.82 & 6.61 & 9.87 & 42.25 & 87.88 & 2311
      \\ 
      \hline
    \end{tabular}
  \end{center}
  \caption{Comparison of the computational time between McCormick
    and virtual-voltage approximations.}\label{tab:McC-VV} 
\vspace*{-1.5ex}
\end{table*}

\subsection{Comparison with BARON} 
We compare the virtual-voltage approximation with the general purpose
mixed-integer nonlinear programming solver BARON~\cite{BARON} in several
small-scale IEEE examples. With the default settings and maximal
computational time at 1000 seconds, BARON does not always return a
feasible solution, see Table~\ref{tab:BARON}.  {\small
  \begin{table*}[htb!]
    \begin{center}
      \begin{tabular}{c c|c|c|c|c|c|c|}
        \cline{3-8}
        & & \multicolumn{2}{|c|}{IEEE 30} & \multicolumn{2}{|c|}{IEEE 39} & \multicolumn{2}{|c|}{IEEE 57} \\ \hline
        \multicolumn{2}{|c|}{$\#$ of switchable lines} & 5 & 10 & 1 & 5 & 1 & 5 \\ \hline
        \multicolumn{1}{|c|}{\multirow{2}{*}{Virtual-voltage approx.}} & lower bound & 1265  & 1265 & 135295 & 135003 & 50997 & 50912 \\	\cline{2-8}
        \multicolumn{1}{ |c|  }{} & upper bound & 1269.9 & 1269.9 & 135303 & 135303 & 52267  & 52267  \\	\hline
        \multicolumn{1}{ |c|  }{\multirow{2}{*}{BARON}} & lower bound & $-2.2\cdot 10^7$ & $-2.3\cdot 10^7$ & $-2.7\cdot 10^8$ & $-2.6\cdot 10^8$ & $-6.8\cdot 10^7$ & $-6.9\cdot 10^7$ \\ \cline{2-8} \multicolumn{1}{ |c|  }{} & upper bound & 1269.9 & 1269.9 & 135303 & $3.4\cdot 10^{8}$ & $7.6\cdot 10^{8}$ & $7.6\cdot 10^{8}$ \\	\hline
      \end{tabular}
    \end{center}
    \caption{Comparison between solutions of BARON and virtual-voltage
      approximation. We pre-select 5 switchable lines for all the test
      cases.}\label{tab:BARON} 
  \vspace*{-1.5ex}
  \end{table*}
} We observe that BARON gives a feasible solution only when it finds
one in the preprocessing stage, which happens for the IEEE 30 bus test
case and the IEEE 39 bus test case with one switchable line. The
branch-and-bound process made in BARON only slowly increases the lower
bound, and barely refines the upper bound given in the preprocessing
stage. On top of discrete binary variables for switching, OTS involves
nonlinearities coming from bilinear products. All these factors can
prevent general purpose solvers such as BARON from finding the optimal
solution efficiently. On the other hand, the virtual-voltage
approximation is built on provably accurate SDP relaxations of OPF and
makes use of the specific structure of the OTS problem.
Table~\ref{tab:BARON} clearly shows the difference in performance
between both methods.

\subsection{Partition-based OTS algorithm}

We examine here the performance of the partition-based
algorithm. Table~\ref{tab:allsw} shows the result of implementing on
the IEEE 118 and IEEE 300 bus test cases the following methods: (i)
the virtual-voltage approximation with 40 switchable lines; and (ii)
the partition-based OTS algorithm with $40$ switchable lines; and
(iii) the virtual-voltage approximation with all the lines
switchable. For the case with 40 switchable lines, one can see that
the partition-based OTS algorithm refines the virtual-voltage
approximation. The solutions obtained by the virtual-voltage
approximation and the partition-based OTS algorithm with 40 switchable
lines are both close to the lower bound. In addition, SDP returns a
rank-1 solution for every case with 40 switchable lines. We quantify
their accuracy by upper bounding the percentage error with the true
optimal solution using $(p^{\opt}_{1} -
p^{\opt}_{3,\text{all}})/{p^{\opt}_{3,\text{all}}}\cdot100\%$.

We simulate case (iii) where all the lines are switchable to compare
the results with the QC relaxation method~\cite{HH-CC-PVH:17}. The QC
relaxation method takes 10 hours to solve the OTS problem for both
IEEE 118 and 300 bus test cases employing servers with 4334 CPUs and
64GB memory. The QC relaxation method converges to a near optimum with
around $1\%$ error for the IEEE 118 test case, while it is unable to
provide a feasible solution for the IEEE 300 bus case. In contrast,
solving \textbf{(P3)} with all lines switchable takes significantly
less time for both cases (around 2min30sec and 1h30min, respectively)
as shown in Table~\ref{tab:allsw}. Though we only retrieve a rank-2
solution from \textbf{(P3)} for IEEE 118 bus case, the condition
number of the decision variable $W$ is larger than 250, which
translates to an error (or violation of constraints) less than
$1\%$. Therefore, the virtual-voltage approximation leads to a
solution of a similar quality as QC. The reason of not retrieving a
rank-1 solution with the virtual-voltage approximation is that it
provides high-rank solutions when the $U$ matrix is introduced on
every line (this is one of the reasons for pre-selecting a subset of
lines). 

\begin{table*}[htb]
  \begin{center}
    \begin{tabular}{c c | c | c | c | c | c | c |}
      \cline{3-8}
      &&\multicolumn{2}{|c|}{Optimal values} &
      \multicolumn{2}{|c|}{Bound on errors} &
      \multicolumn{2}{|c|}{Computation times (sec)}
      \\
      \cline{3-8}
      && IEEE 118 & IEEE 300 & IEEE 118 & IEEE 300 & IEEE 118 & IEEE 300
      \\
      \hline
      \multicolumn{1}{|c}{
        \multirow{2}{*}{
          Virtual-voltage approximation w/ all lines switchable}}&
      \multicolumn{1}{|c|}{lower bound}  & 
      150791 ($\bar{10}$) & 1090397 ($\bar{10}$) & \multirow{2}{*}{$0.36\%$} & \multirow{2}{*}{N/A} & \multirow{2}{*}{142.57} & \multirow{2}{*}{4859.62}
      \\
      \cline{2-4} \multicolumn{1}{ |c  }{}
      &
      \multicolumn{1}{|c|}{upper bound}
      & 151340 (2) & NaN   &  &  & &
      \\
      \hline
      \multicolumn{1}{|c}{      \multirow{2}{*}{
          Virtual-voltage approximation w/ 40 switchable lines}}&
      \multicolumn{1}{|c|}{lower bound}  & 151594 (1) & 1086369 (1) & \multirow{2}{*}{$1.27\%$} & \multirow{2}{*}{$0.52\%$}  & \multirow{2}{*}{149.37}
      & \multirow{2}{*}{5023.90}
      \\
      \cline{2-4} \multicolumn{1}{ |c  }{}
      &       \multicolumn{1}{|c|}{upper bound}
      & 152707 (1) & 1096117 (1) & & & &
      \\ 
      \hline
      \multicolumn{1}{|c}{
        Partition-based OTS w/ 40  switchable lines} & & 152505 (1) &
      1092967 (1) & $1.14\%$ & $0.24\%$ & 173 & 5055.88
      \\
      \hline
    \end{tabular}
  \end{center}
  \caption{Performance of the virtual-voltage approximation and the partition-based
    OTS algorithm on the IEEE 118 and IEEE 300 bus test cases. $(\bar{10})$ indicates that the rank of the decision variable is larger than 10.}\label{tab:allsw} 
  \vspace*{-3ex}
\end{table*}


\subsection{OTS with Security Constraints}
We also validate the virtual-voltage approximation method on OTS with
security constraints. We simulate both IEEE 118 and 300 bus test
systems with one contingency scenario and 40 pre-selected switchable
lines. Table~\ref{tab:scopf} shows the results for the IEEE 118 bus
test system with security constraint on line $\{106,107\}$. The
combined computational time in computing the lower and upper bounds
raises to 834 (395+438) seconds, which is longer than the one without
security constraint (149.37 seconds in Table~\ref{tab:allsw}). For the
IEEE 300 bus test case, CVX cannot find a feasible solution even when
we increase the maximal iterations to $600$ (the default is $100$
iterations), which took around 3.7 hours.
The drastic increase on the computational time limits the
applicability of the SDP-based approach to OTS 
with security constraints. 
\begin{table}[htb]
  \begin{center}
    \begin{tabular}{|c|c | c|}
      \hline
      & Optimal Value & Time (sec) 
      \\
      \hline 
      lower bound & 156896 (5) & 396 \\
      \hline 
      upper bound & 158660 (1) & 438
      \\
      \hline
    \end{tabular}
  \end{center}
  \caption{Performance of the virtual-voltage approximation for OTS
    with security constraint on line $\{106,107\}$.}\label{tab:scopf} 
  \vspace*{-3ex}
\end{table}

}

\vspace*{-1ex}
\section{Conclusions}
\rev{ We have considered OTS problems with security constraints. For
  these scenarios, the standard SDP relaxation of the problem remains
  non-convex because of the presence of bilinear terms and the
  discrete variables.  We have proposed an approximation based on the
  introduction of virtual-voltages to convexify the bilinear terms.
  We have also characterized several of its properties regarding
  physical interpretation and lower and upper bounds on the optimal
  value of the original problem.  To handle the presence of the
  discrete variables, we have built on the virtual-voltage
  approximation to propose a graph partition-based algorithm that
  significantly reduces the computational complexity of solving the
  original problem.  The high degree of accuracy and the reduction in
  computational complexity observed in simulations makes the proposed
  algorithms promising for OTS applications. Future work will
  incorporate other types of discrete controls,
and investigate distributed methods for general mixed-integer OPF problems.
}
\vspace*{-1ex}	
\bibliographystyle{IEEEtran}
\bibliography{alias,Main-add,JC}

\end{document}